\theoremstyle{plain}
  \newtheorem{thm}{Theorem}[section]
  \newtheorem{prop}[thm]{Proposition}
  \newtheorem{lem}[thm]{Lemma}
  \newtheorem{cor}[thm]{Corollary}
\theoremstyle{definition}
  \newtheorem{dfn}[thm]{Definition}
  \newtheorem{exmp}[thm]{Example}
  \newtheorem{rem}[thm]{Remark}
\noindent\makebox[0mm][r]{\rm(\roman{enumi})}}
\newcommand{\codim}{\operatorname{codim}}
\newcommand{\depth}{\operatorname{depth}}
\newcommand{\reg}{\operatorname{reg}}
\newcommand{\gr}{\operatorname{gr}}
\newcommand{\ini}{\operatorname{in}}
\newcommand{\G}{\operatorname{Gr}}
\newcommand{\Max}{\operatorname{Max}}
\renewcommand{\reg}{\operatorname{reg}}
\newcommand{\AP}{\operatorname{Ap}}
\newcommand{\Ap}{\operatorname{Ap}}
\newcommand{\co}{\operatorname{cone}}
\newcommand{\ord}{\operatorname{ord}}
\newcommand{\re}{\operatorname{rem}}
\def\RR{{\mathbb R}}
\def\PP{{\mathbb P}}
\def\KK{{\mathbb K}}
\def\QQ{{\mathbb Q}}
\def\NN{{\mathbb N}}
\def\ZZ{{\mathbb Z}}
\def\DD{{\mathbb D}}
\def\kk{\Bbbk}
\def\0{{\mathbf 0}}
\def\c{{\mathbf c}}
\def\1{{\mathbf 1}}
\def\a{{\mathbf a}}
\def\b{{\mathbf b}}
\def\z{{\mathbf z}}
\def\x{{\mathbf x}}
\def\u{{\mathbf u}}
\def\w{{\mathbf w}}
\def\fm{{\mathfrak m}}
\def\fn{{\mathfrak n}}
\def\fq{{\mathfrak q}}
\def\<{{\langle}}
\def\>{{\rangle}}
\newcommand{\excise}[1]{}
\begin{document}

\title[Monomial minimal reduction ideals]{Simplicial affine semigroups with monomial minimal reduction ideals}

\author{Marco D'Anna,  Raheleh Jafari,  Francesco Strazzanti}

\address{Marco D'Anna, 	Dipartimento di Matematica e Informatica, Universit\`a degli Studi di Catania, Viale Andrea Doria 6, 95125 Catania, Italy.}
\email{mdanna@dmi.unict.it}

\address{Raheleh Jafari,  Mosaheb Institute of Mathematics, Kharazmi University, and School of Mathematics, Institute for Research in Fundamental Sciences (IPM), P.O. Box 19395-5746, Tehran, Iran.} \email{rjafari@ipm.ir}

\address{Francesco Strazzanti, Dipartimento di Matematica, Universit\`a degli Studi di Torino, Via Carlo Alberto 10, 10123 Torino, Italy.}
	\email{francesco.strazzanti@gmail.com}

\subjclass[2010]{13H15, 13H10, 05E40, 20M25}

\keywords{affine semigroup ring, monomial minimal reduction ideal, multiplicity, Cohen-Macaulay, associated graded ring}

\begin{abstract}
We characterize when the monomial maximal ideal of a simplicial affine semigroup ring has a monomial minimal reduction. When this is the case, we study the Cohen-Macaulay  and Gorenstein properties of the associated graded ring and  provide several bounds for the reduction number with respect to the monomial minimal reduction.	
\end{abstract}

\maketitle

\section*{Introduction}

In this paper we study affine semigroup rings $\KK[S]$, with $S$ fully embedded in $\mathbb N^d$, i.e. subalgebras  of the polynomial ring $\KK[x_1,\dots,x_d]$ generated by the monomials with exponents in $S$,  where $\KK$ is a field.  The semigroup  ring $R=\KK[S]$ is of Krull dimension $d$ with a unique monomial maximal  ideal $\fm$.
The study of affine semigroups and corresponding semigroup rings has many applications in different areas of mathematics; for instance, it gives the combinatorical background for  the theory of toric varieties. 
Many properties of affine semigroup rings have been studied in connection to the properties of 
the associated semigroups (see, e.g., \cite{BH} and \cite{BGT}).
A case of particular interest is when $S$ is a simplicial affine semigroup,  i.e.,  when the cone  over $S$, $\co(S)$, has $d$ extremal rays. Under this hypothesis, denoting by $\a_1,\dots,\a_d$ the componentwise smallest nonzero vectors of $S$ situated respectively, on each extremal ray of $\co(S)$ (which are called the extremal rays of $S$), many properties of 
$\KK[S]$ (like, e.g., Cohen-Macaulayness or Gorensteinness) can be studied via the Ap\'ery set of $S$  with respect to $\{\a_1,\dots,\a_d\}$ (see \cite{RG-1998}). 

Affine simplicial semigroups  provide also a natural generalization of numerical semigroups, whose theory has been developed mainly in connection to the study of curve singularities.  In the case of numerical semigroup rings, a
remarkable property is that the  monomial maximal ideal always has a monomial minimal reduction, which is  the principal ideal
generated by the monomial of minimal  positive degree.
This is a key fact to explore the connection between the associated graded ring $\gr_\fm(R)=\oplus_{i=0}^\infty \, \fm^i/\fm^{i+1}$, the ring $R$, and the associated semigroup.
However,  in the case of affine simplicial semigroup rings, a monomial minimal reduction for $\fm$ in general does not exist; so it is natural to study when it exists and, if this is the case, to explore which information can be deduced from it. 

We firstly observe that, if such a minimal reduction exists, it has to be generated by the monomials  $\x^{\a_1},\dots,\x^{\a_d}$ corresponding to the extremal rays,  provided that $\KK$ is infinite. Moreover, setting
$I=(\x^{\a_1},\dots,\x^{\a_d})$ and denoting the multiplicity of $I$ by $e(I)$, we show that $\KK[S]$ is Cohen-Macaulay if and only if $e(I)$ equals the cardinality of the Ap\'{e}ry set of $S$ with respect to the set  $\{\a_1, \dots, \a_d\}$. Since $e(I)=e(\fm)$ precisely when  $I$ is a  reduction ideal of $\fm$, if $I$ is a (minimal) reduction ideal of $\fm$, it is possible to interpret the multiplicity of $R$ in the context of the associated semigroup. 
  We characterize the existence of the monomial minimal reduction of $\fm$ by looking at the degrees of those generators of the semigroup that are not among the extremal rays of $S$, in  Theorem~\ref{monomial red}. Meanwhile, we also provide a bound for the reduction number in terms of rational coordinates of the generators of $S$.  
	We deepen the Cohen-Macaulay case, showing that, when $R$  has minimal multiplicity, the monomial minimal reduction exists  precisely when the set of non-zero Ap\'{e}ry elements  of $S$ with respect to the extremal rays coincide with the minimal generating set, in Proposition~\ref{min red and min mult}. Applying these results to the case of affine semigroups in dimension~2, we provide a characterization of the existence of a monomial minimal reduction of $\fm$ (Proposition \ref{mon red dim 2}), an explicit formula for $R$ to be Cohen-Macaulay (Proposition \ref{parallelogram}), 
	and a formula for the multiplicity of $R$ in terms of some Ap\'{e}ry sets of $S$ (Proposition~\ref{3.8}).

	In the case that $I=(\x^{\a_1},\dots,\x^{\a_d})$ is a minimal reduction for $\fm$, two problems naturally arise: if it is possible to characterize the Cohen-Macaulay and Gorenstein properties for the associated graded ring, and if it is possible to determine  a bound for $r_I(\fm)$, the reduction number of $\fm$ with respect to $I$. As for the first problem, we give a complete answer in Theorem~\ref{thm:gr} and Proposition~\ref{4.7}. We also show that  $r_I(\fm)$ equals the maximum order of Ap\'{e}ry elements with respect to $\{\a_1,\dots,\a_d\}$, when the associated graded ring is Cohen-Macaulay. 	
	The second problem is also motivated by the bound given  in \cite{Hoa-Stuckrad}, in the particular case of homogeneous affine simplicial semigroups. It is known that $r_I(\fm)\leq e(R)$, \cite{Trung-1987}. If $\KK$ has zero characteristic, in \cite{Vasconcelos-1996} Vasconcelos shows that $r_I(\fm)\leq e(R)-1$. For homogeneous affine simplicial semigroups, $I$ is a reduction ideal of $\fm$ and in \cite{Hoa-Stuckrad} it is proved that $r_I(\fm)\leq e(R)-\codim(R)$. We confirm the same bound for simplicial affine semigroups with monomial minimal reduction and Cohen-Macaulay associated graded ring, in Proposition~\ref{HS}. As Example~\ref{ex:4.7} shows, the Cohen-Macaulay condition of $\gr_\fm(R)$ can not be removed in this result. Finally we provide a formula for the Castelnouvo-Mumford regularity of $\gr_\fm(R)$, when $\fm$ has monomial minimal reduction and $\gr_\fm(R)$ is Cohen-Macaulay, in Proposition~\ref{p}.

\section{Preliminaries}
 We start by recalling some definitions and known results that we will use in the paper. For more details see \cite{RG-1999, BH}.
Let $S$ be an affine semigroup fully embedded in $\NN^d$, where $\NN$ denotes the set of non-negative integers and $d\in\NN\setminus\{0\}$. Then, $S$ has a unique minimal generating set $\{\a_1,\dots,\a_n\}$. Let
 $$
 \co(S)=\left \{ \sum_{i=1}^n \lambda_i \a_i: \lambda_i \in \RR_{\geq 0}, \text{ for } i=1,\dots, n \right\},
 $$
 denote the rational polyhedral cone generated by $S$.
 The {\em dimension} (or rank) of $S$ is defined as the dimension of  the affine subspace it generates, which is the same as the dimension of the subspace generated by  $\co(S)$. As $S$ is fully embedded in $\NN^d$, its dimension is $d$.   
 The $\co(S)$ is the intersection of finitely many closed linear half-spaces in $\RR^d$, each of whose bounding hyperplanes contains the origin. These half-spaces are called {\em support hyperplanes}. 
 For $d=1$, $\co(S)=\RR_{\geq 0}$, and for $d=2$ the support hyperplanes are one-dimensional vector spaces  which are called the {\em extremal rays} of $\co(S)$. When $d>2$, 
  the intersection of any two adjacent support hyperplane is a one-dimensional vector space, which is said to be an {\em extremal ray} of $\co(S)$. An element of $S$ is called {\em extremal ray of $S$} if it is the smallest non-zero vector of $S$ in an extremal ray of $\co(S)$.  The $\co(S)$ has at least $d$  extremal rays. $S$ is called {\em simplicial} when $\co(S)$  has exactly $d$ extremal rays.   Throughout the paper, $S$ is a simplicial affine semigroup and $E=\{\a_1,\dots,\a_d\}$ denotes the set of extremal rays of $S$. Then, $\a_1,\ldots,\a_d$ are $\QQ$-linearly independent and 
 $$S\subseteq\sum^d_{i=1}\QQ_{\geq0}\a_i.$$

Throughout the paper $S$ is a simplicial affine semigroup minimally generated by $\a_1,\dots,\a_{d+s}$, where $\a_1,\dots,\a_d$ are the extremal rays of $S$.  
Given $\a\in \NN^d\cap\co(S)$, there exist unique rational numbers $l_i\in\QQ_{\geq 0}$ such that $\a=\sum^d_{i=1}l_i\a_i$.  We denote  $\deg(\a)=\sum^d_{i=1}l_i$.
  Note that  $\deg(\a)+\deg(\b)=\deg(\a+\b)$ for all  $\a,\b\in \NN^d \cap \co(S)$. The hyperplane defined by $\a_1,\dots,\a_d$ contains an integral vector $\a\in \co(S)$ precisely when $\deg(\a)=1$. The set of integral vectors of the convex hull of the extremal rays together with the origin of coordinates consists of the integral vectors $\a\in\co(S)$ with $\deg(\a)\leq1$.    

For a field  $\KK$, the {\it semigroup ring} $\KK[S]$ is the $\KK$-subalgebra of the polynomial ring $\KK[x_1,\dots,x_d]$ generated by the monomials with exponents in $S$.  
 Given $\a=(a_1, \dots, a_d) \in \NN^d$, we set $\x^{\a}=x_1^{a_1}x_2^{a_2}\cdots x_d^{a_d}$.

All over  the paper, $R=\KK[S]$ denote the semigroup ring $\KK[\x^{\a_1},\dots,\x^{\a_{d+s}}]$. Consider the standard grading on the polynomial ring, where  the degree of $x_i$ is the vector $(0,0,\dots, 0,1,0,\dots,0)$ with size $d$ and a unique $1$ in the $i$-th entry.  Then, $R$ is a graded ring with the graded maximal ideal $\fm=(\x^{\a_1},\dots,\x^{\a_{d+s}})$.

  A subset $H\subseteq S$ is called an ideal of $S$, if $S+H\subseteq H$. The largest non-trivial ideal of $S$ is the maximal ideal $M=S\setminus\{0\}$.     
For any subset $H$ of $S$, let $\KK\{H\}$ denote the $\KK$-linear span of the monomials $\x^\a$ with $\a\in H$. Then, $I$ is a monomial ideal of $\KK[S]$ if and only if  $I=\KK\{H\}$ for some ideal $H$ of $S$. Note that $\fm=\KK\{M\}$.

Let $H\subseteq F$ be ideals of $S$. 
By \cite[Lemma~1.9]{GSW-76}, $F\setminus H$ is a finite set if and only if $\lambda_R(\KK\{F\}/\KK\{H\})<\infty$, where $\lambda_R(-)$ denotes the length of an $R$-module. 
  In this case, 
  \begin{equation}\label{lambda}
  |F\setminus H|=\dim_\KK(\KK\{F\}/\KK\{H\})=\lambda_R(\KK\{F\}/\KK\{H\}). 
  \end{equation}

    Note that if $H_1$ and $H_2$ are ideals of $S$, then $H_1+H_2=\{h_1+h_2 \, ; \, h_1\in H_1 ,h_2\in H_2\}$ is
 also an  ideal of $S$. For a positive integer $n$ and an ideal $H$, by $nH$ we mean $H+(n-1)H$ and $2H=H+H$.

   For $\a\in S$, 
 the maximum integer $n$ such that $nM$ contains $\a$ is called the order of $\a$ in $S$ and it is denoted by $\ord_S(\a)$. In other words, $\a\in nM\setminus(n+1)M$ if
 and only if $n=\ord_S(\a)$. So that $\a$ may be written as $\a=\sum^{d+s}_{i=1}r_i\a_i$ for some non-negative integers $r_i$ such  that $\sum^{d+s}_{i=1}r_i=n$.  We call this representation a {\em maximal expression} of $\a$. 
 
Let $I_S$ denote the kernel of the $\KK$-algebra homomorphism $\varphi:\KK[z_1,\dots,z_{d+s}]\mapsto\KK[S]$, defined by $z_i\mapsto\x^{\a_i}$, for $i=1,\dots,d+s$. Then, the {\em defining ideal} $I_S$ is a binomial prime ideal,  \cite[Proposition~1.4]{Herzog-1970}.

  \begin{rem}\label{rem:deg1}
  	It is easy to see that	$\deg(\a_{d+j})=1$ for $j=1,\dots,s$ if and only if all expressions of any element of $S$ are maximal. In this case $I_S$ is a homogeneous ideal and $R\cong\gr_\fm(R)$.
  \end{rem}

The {\em Ap\'{e}ry set} of $S$ with respect to an element $\b\in S$ is defined as $\Ap(S,\b)=\{\a\in S \, ; \, \a-\b\notin S\}$. Since $S\subseteq\NN^d$, for $\b\neq \0$ we have  $\0\in \Ap(S,\b)$, where $\0$ denotes the zero vector of $\NN^d$.   
 We set 
$$\Ap(S,E)=\{\a\in S \, ; \, \a-\a_i\notin S, \text{ for  } i=1,\dots,d\}=\cap^d_{i=1}\Ap(S,\a_i).$$   Moreover, let $\G(S)=\{\a-\b \, ; \, \a,\b\in S\}$ be the {\em group of differences} of $S$,  and let 
\[
P_S=\left\{ \z \in\NN^d \, ; \, \z=\sum_{i=1}^d \lambda_i \a_i \text { with } 0\leq \lambda_i <1 \text{ for } i=1,\dots,d\right\}.
\]
Any element  $\a\in \NN^d\cap\co(S)$ is uniquely written as $\a=\sum_{i=1}^d n_i \a_i+\re(\a)$, where $\re(\a)\in P_S\cap\NN^d$ and $n_1,\dots,n_d\in\NN$. 

 The following lemma is an easy consequence of \cite[Lemma~1.4 and Corollary~1.6]{RG-1998}. For the convenience of the reader, we include this fact here.

\begin{lem}\label{lemAp}
	The following statements hold: 
	\begin{enumerate}
		\item[(a)] $\re(\AP(S,E))=P_S\cap\G(S)$; 
		\item[(b)] $R$ is Cohen-Macaulay if and only if  the restriction of $\re(-)$ to $\AP(S,E)$ is an injective map.
	\end{enumerate}
\end{lem}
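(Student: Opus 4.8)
The plan is to prove (a) directly from the definition of the map $\re(-)$, and then to deduce (b) by regarding $R$ as a finitely generated module over the polynomial subring $A=\KK[\x^{\a_1},\dots,\x^{\a_d}]$.

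For the inclusion $\re(\AP(S,E))\subseteq P_S\cap\G(S)$ in (a) there is nothing to do: if $\a\in\AP(S,E)$ and $\a=\sum_{i=1}^d n_i\a_i+\re(\a)$ is its decomposition, then $\re(\a)\in P_S$ by construction and $\re(\a)=\a-\sum_i n_i\a_i\in\G(S)$ since $\a,\a_1,\dots,\a_d\in S$; the same computation gives $\re(S)\subseteq P_S\cap\G(S)$. For the reverse inclusion I would argue in two steps. First, given $\z\in P_S\cap\G(S)$, I would show the fibre $S_\z:=\{\a\in S:\re(\a)=\z\}$ is nonempty: writing $\z=\u-\v$ with $\u,\v\in S$ and clearing denominators in each $\a_{d+k}=\sum_i l^{(k)}_i\a_i$ to obtain an integer $T\geq 1$ with $Tl^{(k)}_i\in\NN$, so that $T\a_{d+k}\in\NN\a_1+\cdots+\NN\a_d$ for every $k$, I would add $T-1$ copies of each $\a_{d+k}$ occurring in a fixed expression of $\v$; this produces $\w\in S$ with $\v+\w=\sum_i n_i\a_i$ for suitable $n_i\in\NN$, whence $\z+\sum_i n_i\a_i=\u+\w\in S$ is an element of $S_\z$. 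Second, I would take $\a\in S_\z$ of least degree: if $\a-\a_j\in S$ for some $j\leq d$, then membership in $\co(S)$ forces the $j$-th integral coefficient of $\a$ to be positive, so $\a-\a_j$ again lies in $S_\z$ but with strictly smaller degree, a contradiction; hence $\a\in\AP(S,E)$ and $\re(\a)=\z$. This proves $\re(\AP(S,E))=\re(S)=P_S\cap\G(S)$.

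For (b), note that $A$ is a polynomial ring in $d$ variables since $\a_1,\dots,\a_d$ are $\QQ$-linearly independent, that each $\x^{\a_{d+k}}$ is integral over $A$ by the relation above, and hence that $R$ is a finitely generated graded $A$-module with $\dim A=\dim R=d$; consequently $R$ is Cohen--Macaulay if and only if $R$ is a free $A$-module (graded Auslander--Buchsbaum). Using (a), partition $S=\bigsqcup_{\z\in P_S\cap\G(S)}S_\z$; this yields a decomposition $R=\bigoplus_\z\KK\{S_\z\}$ of graded $A$-modules in which $\KK\{S_\z\}$ is, up to a degree shift, the monomial ideal $J_\z\subseteq A$ corresponding to the up-set $N_\z:=\{(n_1,\dots,n_d)\in\NN^d:\z+\sum_i n_i\a_i\in S\}$. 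Arguing as in (a), the minimal elements of $N_\z$ correspond bijectively to the points of $\AP(S,E)\cap S_\z$. Now $R$ is free over $A$ iff each $J_\z$ is free iff each $J_\z$ is principal (a nonzero ideal of a domain is free precisely when it is principal) iff each $N_\z$ has a unique minimal element iff $|\AP(S,E)\cap S_\z|=1$ for every $\z\in P_S\cap\G(S)$; and since $\re(-)$ maps $\AP(S,E)$ onto $P_S\cap\G(S)$ by (a), this last condition is exactly the injectivity of $\re|_{\AP(S,E)}$.

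I expect the main obstacle to be the reverse inclusion in (a): one has to make sure that the absorption step produces an element of $S$ itself, not merely of the strictly larger set $\co(S)\cap\NN^d$, and that the least-degree preimage genuinely belongs to $\AP(S,E)$. Granting this, the rest of (b) is routine, using only the standard facts that a maximal Cohen--Macaulay module over a polynomial ring is free and that a nonzero ideal of a domain is free if and only if it is principal. Alternatively, both parts follow quickly from \cite[Lemma~1.4 and Corollary~1.6]{RG-1998}.
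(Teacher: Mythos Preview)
Your argument is correct. For part (a) you follow essentially the same route as the paper: both proofs first produce an element of $S$ of the form $\z+\sum_i n_i\a_i$ (the paper multiplies a suitable $\b\in S$ by a large integer, you add $(T-1)$ copies of each non-extremal generator---these are equivalent devices), and then peel off copies of the $\a_i$ to reach an Ap\'ery element with remainder $\z$ (the paper simply asserts the decomposition $\z+\sum r_i\a_i=\w+\sum s_i\a_i$ with $\w\in\AP(S,E)$, while you phrase it as a least-degree argument). The worry you flag in your last paragraph is handled correctly: since $\z\in P_S$ has $j$-th rational coefficient in $[0,1)$, membership of $\a-\a_j$ in $\co(S)$ indeed forces $n_j\geq 1$.

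For part (b) your approach genuinely differs from the paper's. The paper makes the short observation that, for $\w_1,\w_2\in\AP(S,E)$, one has $\re(\w_1)=\re(\w_2)$ if and only if $\w_1-\w_2\in\G(\a_1,\dots,\a_d)$, and then invokes \cite[Corollary~1.6]{RG-1998} as a black box for the Cohen--Macaulay criterion. You instead give a self-contained proof of that criterion: decomposing $R=\bigoplus_{\z}\KK\{S_\z\}$ as a graded module over $A=\KK[\x^{\a_1},\dots,\x^{\a_d}]$, identifying each summand with a monomial ideal, and using Auslander--Buchsbaum together with the fact that a nonzero ideal in a domain is free exactly when it is principal. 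This buys you independence from the external reference and makes transparent why the Ap\'ery set, the freeness of $R$ over $A$, and the formula $e(R)=|\AP(S,E)|$ (appearing later in the paper) are all governed by the same combinatorics; the paper's route is shorter but less self-contained. Your closing remark that both parts also follow from \cite[Lemma~1.4 and Corollary~1.6]{RG-1998} is exactly what the paper does.
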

\begin{proof}
	(a) If $\z=\re(\a)$ for some $\a\in\AP(S,E)$, then $\z=\a-\sum^d_{i=1}n_i\a_i$ for some $n_i\geq 0$, which implies $\z\in\G(S)$. Now, let $\z\in P_S\cap\G(S)$. Then, $\z+\b\in S$ for some $\b\in S$. Since $S$ is simplicial, $n\b=\sum^d_{i=1}r_i\a_i$ for some $r_i \in \mathbb{N}$ and $n \gg 0$. Consequently, $\z+n\b=\z+\sum^d_{i=1}r_i\a_i\in S$. 
	Thus $\z+\sum^d_{i=1}r_i\a_i=\w+\sum^d_{i=1}s_i\a_i$ for some $\w\in\Ap(S,E)$ and $s_1,\dots,s_d\in\NN$. As $\z\in P_S$, we derive $\z=\re(\w)$.
	
	(b) Let  $\w_1,\w_2\in\AP(S,E)$ and $\w_j=\sum^d_{i=1}k_{i_j}\a_i+\re(\w_j)$ for $k_{i_j}\in\NN$ and $j=1,2$. Then, 
	\[
	\w_1-\w_2=\re(\w_1)-\re(\w_2)+\sum^d_{i=1}(k_{i_1}-k_{i_2})\a_i.
	\]
	Note that $\sum^d_{i=1}(k_{i_1}-k_{i_2})\a_i\in\G(\a_1,\dots,\a_d)$ and $\re(\w_1),\re(\w_2)\in P_S$. Thus, $\w_1-\w_2\in\G(\a_1,\dots,\a_d)$ if and only if $\re(\w_1)=\re(\w_2)$. The result follows by  \cite[Corollary~1.6]{RG-1998}. 
\end{proof}

\begin{dfn}
An ideal $J\subseteq\fm$ is called a {\em reduction ideal} of $\fm$ if $\fm^{n+1}=J\fm^n$ for $n\gg0$. 
A reduction ideal is said to be a {\em minimal reduction ideal} if it is minimal with respect to set inclusion.
  If $J$ is a minimal reduction ideal of $\fm$, the smallest positive integer $r$ such that $\fm^{n+1}=J\fm^n$ for $n\geq r$ is called {\em reduction number} of $\fm$ with respect to $J$ and it is denoted by $r_J(R)$.  
\end{dfn}

    \begin{lem}\label{mm}
    	The following statements hold.
    	\begin{enumerate}
    		\item If $(\x^{\a_1},\dots,\x^{\a_d})$ is a reduction ideal of $\fm$, then it is a minimal reduction.
    		\item If $\KK$ is  an infinite field, then  the unique possible monomial minimal reduction ideal of $\fm$ is the ideal $(\x^{\a_1},\dots,\x^{\a_d})$. 
    	\end{enumerate}
  \end{lem}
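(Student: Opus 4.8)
The plan is to exploit the fact that $\dim R = d$ together with the standard theory of reductions of ideals in Noetherian local (here, graded) rings. For part (1), suppose $J=(\x^{\a_1},\dots,\x^{\a_d})$ is a reduction ideal of $\fm$. Any reduction ideal $J'\subseteq J$ of $\fm$ is automatically a reduction of $J$ itself (since $\fm^{n+1}=J'\fm^n$ forces, by restricting to the submodule generated by suitable powers, $J^{n+1}=J'J^n$ for large $n$ — more directly, a reduction of $\fm$ contained in $J$ is a reduction of $J$ because $J$ is squeezed between $J'$ and $\fm$). Now $J$ is generated by $d = \dim R$ elements, and a proper reduction would have analytic spread strictly less than... — actually the cleanest route: the analytic spread $\ell(\fm)$ equals $\dim R = d$ because $R$ is graded with $R_0=\KK$ a field and $\fm$ the irrelevant ideal, so $\ell(\fm)=\dim R$. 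A minimal reduction has a minimal generating set of cardinality $\ell(\fm)=d$. Since $J$ is generated by exactly $d$ elements and is a reduction of $\fm$, it must be a minimal reduction: any reduction ideal properly contained in $J$ would be a reduction of $\fm$ minimally generated by fewer than $d$ elements when $\KK$ is... — to avoid hypotheses on $\KK$, I would instead argue that $\x^{\a_1},\dots,\x^{\a_d}$ is a system of parameters for $R$ (they are $\QQ$-linearly independent, so $R/J$ has Krull dimension $0$), hence a regular sequence on the polynomial ring... no: simplest is that a $d$-generated reduction of an ideal of analytic spread $d$ is minimal because no generator can be dropped (dropping one leaves an ideal generated by $d-1$ elements, which cannot be a reduction since its analytic spread is at most $d-1<d=\ell(\fm)$, using that the analytic spread of a reduction equals $\ell(\fm)$). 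That settles (1).

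For part (2), assume $\KK$ is infinite and let $J'=\KK\{H\}$ be a monomial minimal reduction ideal of $\fm$, where $H$ is an ideal of $S$. Since $\KK$ is infinite, minimal reductions of $\fm$ exist and all have exactly $\ell(\fm)=d$ minimal generators. A monomial minimal reduction is therefore minimally generated by $d$ monomials $\x^{\b_1},\dots,\x^{\b_d}$ with $\b_1,\dots,\b_d\in S$. The key step is to show $\{\b_1,\dots,\b_d\}=\{\a_1,\dots,\a_d\}$. Because $J'$ is a reduction of $\fm$, we have $\sqrt{J'}=\sqrt{\fm}=\fm$, so $R/J'$ is Artinian, meaning $\x^{\b_1},\dots,\x^{\b_d}$ generate an $\fm$-primary ideal; hence $\b_1,\dots,\b_d$ lie on $d$ distinct extremal rays of $\co(S)$ — indeed, if two of the $\b_i$ were on the same ray, or if some extremal ray contained none of them, then $R/J'$ would fail to be zero-dimensional, as the monomials along an uncovered ray would give an infinite-dimensional quotient. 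So after reindexing, $\b_i = m_i \a_i + (\text{possible other-ray contributions})$; but in fact, writing $\b_i=\sum_j l_{ij}\a_j$ with $l_{ij}\in\QQ_{\geq 0}$, the $\fm$-primary condition combined with minimality of the generating set forces each $\b_i$ to be a positive rational multiple of a single $\a_i$ — the relevant fact being that along the ray $\RR_{\geq 0}\a_i$, the only semigroup elements are $\NN\a_i$ (since $\a_i$ is the smallest nonzero vector of $S$ on that ray, actually I should double-check this need not hold; but $\x^{\a_i}$ must lie in $\sqrt{J'}=\fm$, so some power of it is in $J'$, and conversely reduction gives $\b_i \in \co(\{\a_1,\dots,\a_d\})$ with all mass on ray $i$ by the primary-ness argument, hence $\b_i = c_i\a_i$ for some positive integer $c_i$). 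Finally, $J' = (\x^{c_1\a_1},\dots,\x^{c_d\a_d}) \subseteq (\x^{\a_1},\dots,\x^{\a_d}) = J$; since by part (1) $J$ is a minimal reduction whenever it is a reduction, and one checks $J$ is a reduction of $\fm$ once $J'$ is (as $\fm^{n+1}=J'\fm^n\subseteq J\fm^n\subseteq \fm^{n+1}$), minimality of $J'$ forces $J'=J$, whence $c_i=1$ for all $i$ and $J'=(\x^{\a_1},\dots,\x^{\a_d})$.

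The main obstacle I anticipate is the ray-by-ray analysis in part (2): making rigorous the claim that an $\fm$-primary monomial ideal with exactly $d$ minimal generators must have those generators sitting one-per-extremal-ray and supported only on that single ray. This requires combining (i) the zero-dimensionality of $R/J'$ — which rules out two generators on the same ray and rules out an uncovered ray — with (ii) the minimality of the generating set and the position of the $\a_i$ as smallest vectors on their rays, to upgrade "supported on ray $i$" to "an integer multiple of $\a_i$". I would handle (i) by noting that if ray $i$ is not met by any $\b_k$, then the images of $\x^{\a_i}, \x^{2\a_i}, \dots$ in $R/J'$ are $\KK$-linearly independent, contradicting $\lambda_R(R/J')<\infty$ via equation~\eqref{lambda}; and (ii) by observing that the support structure of a monomial generator of an $\fm$-primary ideal, together with $\deg(\b_i)\geq 1$ forced on each generator, pins it to $\NN\a_i$. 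Everything else is a formal consequence of analytic-spread bookkeeping and the containment $J'\subseteq J$.
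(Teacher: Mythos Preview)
Your argument for (1) is fine. For (2) there is a genuine gap at the step ``hence $\b_i = c_i\a_i$ for some positive integer $c_i$'' (equivalently, at the containment $J'\subseteq J$). You correctly deduce from $\fm$-primariness that each generator $\b_i$ lies on a distinct extremal ray, say $\b_i=r_i\a_i$ with $r_i\in\QQ_{\ge1}$; but it is \emph{not} true in general that $S\cap\RR_{\ge0}\a_i=\NN\a_i$, and you yourself flag this. For instance, take $S=\langle(2,0),(3,0),(0,1)\rangle$: then $\a_1=(2,0)$, and $(3,0)=\tfrac32\a_1\in S$ lies on the first ray without being an integer multiple of $\a_1$, and indeed $\x^{(3,0)}\notin(\x^{\a_1})R$ since $(1,0)\notin S$. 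So neither $\fm$-primariness nor $\deg(\b_i)\ge1$ pins $\b_i$ to $\NN\a_i$, and your containment $J'\subseteq J$ is not established. (A minor slip: in your final sentence it is minimality of $J$, not of $J'$, that would force $J'=J$ from $J'\subseteq J$.)

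The repair is to use the reduction hypothesis itself rather than mere primariness, and this is essentially the paper's route. From $\x^{(n+1)\a_i}\in J'\fm^n$ one gets $(n+1)\a_i=\b_k+\c$ with $\ord_S(\c)\ge n$; linear independence of $\a_1,\dots,\a_d$ forces $k=i$ and $\c=(n+1-r_i)\a_i$ on ray $i$. Every nonzero element of $S$ on ray $i$ has degree $\ge1$, so any expression of $\c$ as a sum of elements of $M$ has at most $\deg(\c)=n+1-r_i$ summands, i.e.\ $\ord_S(\c)\le n+1-r_i$. Hence $r_i\le1$, so $\b_i=\a_i$ and $\x^{\a_i}$ is itself a generator of $J'$; since $J'$ is $d$-generated, $J'=(\x^{\a_1},\dots,\x^{\a_d})$. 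The paper compresses this into ``as $\a_1,\dots,\a_d$ are linearly independent, $\x^{\a_1},\dots,\x^{\a_d}$ belong to the generating set of $I$'' and then counts generators, bypassing the $J'\subseteq J$ detour entirely.
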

\begin{proof}
	Let $I$ be a monomial reduction ideal of $\fm$. Since $I$ is a monomial ideal and $\fm$ is the unique monomial maximal   ideal, it is easy to verify that  $I$ and $IR_\fm$ have the same number of generators, and
	$IR_\fm$ is also a reduction ideal of $\fm R_\fm$.
Thus, (1) follows by \cite[Corollary~8.3.6]{Huneke-Swanson}.
 As    $\a_1,\dots,\a_d$ are linearly independent, $\x^{\a_1},\dots,\x^{\a_d}$ should belong to the generating set of $I$. If $\KK$ is infinite, then  \cite[Proposition~8.3.7]{Huneke-Swanson} implies that  $IR_\fm$ is generated by $d$ elements. 	 Therefore,  $I=(\x^{\a_1},\dots,\x^{\a_d})$.
\end{proof}

 For  an $\fm$-primary ideal $\fq$, let $e(\fq)$ denote the {\em multiplicity} of $\fq$, i.e.   
   \[
   e(\fq)=\lim_{n\rightarrow\infty}\frac{d!}{n^d}\lambda_R(R/\fq^n).
   \]
We refer to $e(\fm)$  as the {\em multiplicity of $R$} and write $e(R)$ for it.

\begin{lem}\label{eqm}
	Let $\fq$ be a parameter ideal of $\fm$. 
	\begin{enumerate}
		\item[(a)] $e(\fq)=e(R)$ if and only if $\fq$ is a  reduction ideal of $\fm$.
		\item[(b)]  $R$ is Cohen-Macaulay if and only if $e((\x^{\a_1},\dots,\x^{\a_d}))=|\AP(S,E)|$.
		\item[(c)] $e(R)\leq|\Ap(S,E)|$.	
	\end{enumerate}
	\end{lem}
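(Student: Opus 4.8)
The plan is to treat (a) through the classical interplay between multiplicity and reductions, and to derive (b) and (c) from the combinatorial description of the ideal $I:=(\x^{\a_1},\dots,\x^{\a_d})$ together with the standard comparison between the multiplicity and the colength of a parameter ideal. For (a), note first that $\fq\subseteq\fm$, since $\fq$ is generated by non-units. If $\fq$ is a reduction of $\fm$, fix $n$ with $\fm^{n+1}=\fq\fm^n$; an immediate induction gives $\fm^{n+k}=\fq^k\fm^n$ for all $k\geq 0$, hence $\fm^{n+k}\subseteq\fq^k\subseteq\fm^k$. Comparing lengths gives $\lambda_R(R/\fm^{n+k})\geq\lambda_R(R/\fq^k)\geq\lambda_R(R/\fm^k)$, and after multiplying by $d!/k^d$ the two outer terms tend to $e(R)$, so $e(\fq)=e(R)$. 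For the converse I would invoke Rees's theorem: since $R=\KK[S]$ is a domain of finite type over $\KK$, the local ring $R_\fm$ is formally equidimensional, and the multiplicity of an $\fm$-primary ideal is unchanged by localization at $\fm$; thus $e(\fq)=e(R)$ means $e(\fq R_\fm)=e(\fm R_\fm)$ with $\fq R_\fm\subseteq\fm R_\fm$, and Rees's theorem (\cite[Theorem~11.3.1]{Huneke-Swanson}) gives that $\fq R_\fm$ is a reduction of $\fm R_\fm$. To descend to $R$, for $n\gg 0$ the $R$-module $\fm^{n+1}/\fq\fm^n$ is annihilated by a power of $\fm$ (as $\fq\fm^n$ is $\fm$-primary) and vanishes after localization at $\fm$, hence is zero; so $\fm^{n+1}=\fq\fm^n$, i.e. $\fq$ is a reduction of $\fm$.

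For (b) I would first check that $I$ is a parameter ideal of $\fm$: writing $\a_{d+j}=\sum_{i=1}^d l_i\a_i$ with $l_i\in\QQ_{\geq 0}$ and clearing denominators by a common $N\in\NN$ yields $N\a_{d+j}\in\sum_{i=1}^d\NN\a_i$, so $\x^{N\a_{d+j}}\in I$ for $j=1,\dots,s$; hence $\sqrt I=\fm$ and $I$ is $\fm$-primary, generated by $d=\dim R$ elements. Next, since $I$ is a monomial ideal and $\fm$ is the unique monomial maximal ideal, one has $\x^\a\in IR$ exactly when $\a-\a_i\in S$ for some $i\in\{1,\dots,d\}$; that is, $IR=\KK\{S\setminus\Ap(S,E)\}$, so $R/IR\cong\KK\{\Ap(S,E)\}$, the set $\Ap(S,E)$ is finite, and by \eqref{lambda} we obtain $\lambda_R(R/IR)=|\Ap(S,E)|$. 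It then remains to quote the classical fact that for a parameter ideal $\fq$ one has $e(\fq)\leq\lambda_R(R/\fq)$, with equality if and only if the ring is Cohen-Macaulay (equivalently, the defining system of parameters is a regular sequence; see \cite[Theorem~4.7.10]{BH}); applying this to $\fq=I$, after passing to $R_\fm$, gives precisely the statement of (b).

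Part (c) is then immediate: $I\subseteq\fm$ gives $e(R)=e(\fm)\leq e(I)$, while the inequality just quoted together with the length computation yields $e(I)\leq\lambda_R(R/IR)=|\Ap(S,E)|$. The only step requiring genuine input is the converse implication in (a), which rests on Rees's theorem and hence on the formal equidimensionality of $R_\fm$; this is exactly where it matters that $R$ is a domain. Everything else reduces to the standard multiplicity bound for parameter ideals and to the bookkeeping identifying $IR$ with $\KK\{S\setminus\Ap(S,E)\}$.
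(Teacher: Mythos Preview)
Your proof is correct and follows essentially the same route as the paper's: (a) via Rees's multiplicity theorem, and (b) and (c) via the identification $\lambda_R(R/I)=|\Ap(S,E)|$ combined with the inequality $e(\fq)\le\lambda_R(R/\fq)$ and its equality criterion for Cohen--Macaulayness. You supply more detail than the paper does---the elementary squeeze for the forward direction of (a), the verification that $I$ is $\fm$-primary, and the descent from $R_\fm$ to $R$---but the paper simply cites \cite[Theorem~11.3.1 and Corollary~1.2.5]{Huneke-Swanson} and \cite[Theorems~14.10 and~17.11]{Matsumura} for exactly the same ingredients.
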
 
\begin{proof}
(a)  is a  consequence of \cite[Theorem~11.3.1]{Huneke-Swanson} and \cite[Corollary~1.2.5]{Huneke-Swanson}. 
Note that $\lambda_R(R/(\x^{\a_1},\dots,\x^{\a_d}))=|S\setminus H|=|\Ap(S,E)|$, where $H=\{\a_1,\dots,\a_d\}+S$. Thus, (b) follows by \cite[Theorem~17.11]{Matsumura}. 

Let now $\fq=(\x^{\a_1},\dots,\x^{\a_d})$. Since $S$ is simplicial, $\fq$ provides a  parameter ideal for $R$. As  $\lambda_R(R/\fq)=|\Ap(S,E)|$, the result follows by  \cite[Theorem~14.10]{Matsumura}.
\end{proof}

\begin{rem}  The argument in the proof of \cite[Corollary 4.7]{Ojeda-Tenorio-2017}  provides another perspective of Lemma~\ref{eqm}(b). It is shown  that  $R$ is Cohen-Macaulay if and only if it is a free $\KK[z_1,\dots,z_d]$-module of rank $|\AP(S,E)|$.  Therefore, if $R$ is Cohen-Macaulay, then  $e(\x^{\a_1},\dots,\x^{\a_d})=|\Ap(S,E)|e(\KK[z_1,\dots,z_d])=|\Ap(S,E)|$. 
\end{rem}

\begin{prop}\label{prop:mon}
If two of the following conditions hold, then the third one is also satisfied. 
\begin{enumerate}
	\item[(a)] $e(R)=|\AP(S,E)|$.
	\item[(b)]$(\x^{\a_1},\dots,\x^{\a_d})$ is a minimal reduction ideal of $\fm$. 
		\item[(c)] $R$  is  a Cohen-Macaulay ring.
\end{enumerate}
\end{prop}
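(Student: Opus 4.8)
The plan is to show the three equivalences among (a), (b), (c) by exploiting the two tools already in place: Lemma~\ref{eqm}(a), which says $(\x^{\a_1},\dots,\x^{\a_d})$ is a reduction ideal of $\fm$ if and only if $e((\x^{\a_1},\dots,\x^{\a_d}))=e(R)$ (and by Lemma~\ref{mm}(1) such a reduction is automatically minimal), and Lemma~\ref{eqm}(b), which says $R$ is Cohen-Macaulay if and only if $e((\x^{\a_1},\dots,\x^{\a_d}))=|\AP(S,E)|$. Write $\fq=(\x^{\a_1},\dots,\x^{\a_d})$ and recall from Lemma~\ref{eqm}(c) that in all cases $e(R)\leq|\AP(S,E)|$, while $e(\fq)\geq e(R)$ always holds since $\fq\subseteq\fm$ is a parameter ideal. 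So the whole statement is really a formal consequence of the chain of inequalities $e(R)\leq e(\fq)$ and $e(R)\leq|\AP(S,E)|$ together with the identity $e(\fq)=|\AP(S,E)|$; indeed by Lemma~\ref{eqm} proof, $\lambda_R(R/\fq)=|\AP(S,E)|$, and one always has $e(\fq)\leq\lambda_R(R/\fq)$ for a parameter ideal.

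First I would record the key numerical facts: $e(R)\leq e(\fq)$ (reduction ideals cannot decrease multiplicity, equivalently \cite[Theorem~11.3.1]{Huneke-Swanson}), $e(\fq)\leq\lambda_R(R/\fq)=|\AP(S,E)|$, and $e(R)\leq|\AP(S,E)|$. Then the three conditions translate as: (a) is $e(R)=|\AP(S,E)|$; (b) is $e(\fq)=e(R)$ (using Lemma~\ref{eqm}(a) and Lemma~\ref{mm}(1)); and (c) is $e(\fq)=|\AP(S,E)|$ (using Lemma~\ref{eqm}(b)). Now assume (a) and (b): then $e(\fq)=e(R)=|\AP(S,E)|$, giving (c). Assume (a) and (c): then $e(R)=|\AP(S,E)|=e(\fq)$, giving (b). Assume (b) and (c): then $e(R)=e(\fq)=|\AP(S,E)|$, giving (a). In each case the argument is a one-line substitution once the dictionary is set up.

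The main obstacle, such as it is, lies not in the logic but in making sure the translations are airtight: one must check that (b) as stated — ``$\fq$ is a \emph{minimal} reduction ideal'' — is equivalent to ``$\fq$ is a reduction ideal'', which is exactly Lemma~\ref{mm}(1), and that Lemma~\ref{eqm}(a) applies because $\fq$ is genuinely a parameter ideal (this uses simpliciality: $\a_1,\dots,\a_d$ are $\QQ$-linearly independent, so $\x^{\a_1},\dots,\x^{\a_d}$ is a system of parameters for the $d$-dimensional ring $R$, as already noted in the proof of Lemma~\ref{eqm}). I would also make explicit the inequality $e(\fq)\leq\lambda_R(R/\fq)$ for a parameter ideal, which is standard, so that $e(\fq)=|\AP(S,E)|$ is literally Lemma~\ref{eqm}(b). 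With these identifications in hand the proof is purely formal: the three conditions are each an equality between two of the three numbers $e(R)\leq e(\fq)$ and $e(R)\leq|\AP(S,E)|=\lambda_R(R/\fq)\geq e(\fq)$, and any two of the equalities force the third.
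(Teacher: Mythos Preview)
Your proposal is correct and takes essentially the same approach as the paper: translate (a), (b), (c) via Lemma~\ref{eqm} into the three equalities $e(R)=|\AP(S,E)|$, $e(\fq)=e(R)$, and $e(\fq)=|\AP(S,E)|$, after which any two force the third by transitivity. The paper's proof is just a terser packaging of the same substitution (it assumes (b) to get (a)$\Leftrightarrow$(c), then assumes (c) to get (a)$\Leftrightarrow$(b)); your discussion of the inequalities $e(R)\leq e(\fq)\leq\lambda_R(R/\fq)$ is accurate but not actually needed for the argument, since transitivity of equality suffices.
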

\begin{proof}
	Let $\fq=(\x^{\a_1},\dots,\x^{\a_d})$. 
If $\fq$ is a minimal reduction ideal of $\fm$, then   $e(R)=e(\fq)$, by Lemma~\ref{eqm}. Thus, (a) and (c) are equivalent. 
If  $R$ is Cohen-Macaulay, then the equivalence of (a) and (b) follows again by Lemma~\ref{eqm}. 
\end{proof}

\begin{exmp} \label{CMnotMonomial}
Let $S$ be the affine semigroup generated by $\a_1=(2,0)$, $\a_2=(0,4)$, $\a_3=(1,1)$, and $\a_4=(1,2)$. It is possible to see that $R=\mathbb{Q}[\x^{\a_1},\dots,\x^{\a_4}]$ is Cohen-Macaulay and $e(R)=6$, for instance by using Macaulay2~\cite{Mac2}. On the other hand, in this case $\AP(S,E)=\{(0,0),(1,1),(2,2),(3,3),(1,2),(2,3),(3,4){\, (4,5)}\}$ and, then, $e(R) < |\AP(S,E)|$. By the previous proposition, this also implies that $(\x^{\a_1},\x^{\a_2})=(x_1^2,x_2^4)$ is not a minimal reduction of the maximal ideal of $R$. 
\end{exmp}

Assuming $I$ is a minimal reduction ideal of $\fm$ we can deduce an interesting consequence.

\begin{rem}\label{rem:red}
	Let $I=(\x^{\a_1},\dots,\x^{\a_d})$ be a  reduction ideal of $\fm$. Then, given $\z\in S$ with $\ord_S(\z)\geq r_I(R)+1$, the monomial $\x^\z$ is divided by a monomial $\x^{\a_i}$ for some $1\leq i\leq d$. Therefore, 
$r_I(R)\geq\max\{\ord_S(\w) \, ; \, \w\in\Ap(S,E)\}$.
\end{rem}

\section{Reduction  ideals of the maximal ideal}
 Let $c_j$ denote the smallest positive integer with $c_j\a_{d+j}\in\sum^d_{i=1}\NN \a_i$ for $j=1,\dots,s$.
Then, for every such $j$, there are unique integers  $l_{j_1},\dots,l_{j_d}\in\NN$ such that 
\[
c_j\a_{d+j}=\sum^d_{i=1}l_{j_i}\a_i.
\]
 Note that $\deg(\a_{d+j})=e_j/c_j$, where $e_j=l_{j_1}+\dots+l_{j_d}$.

\begin{lem}\label{1-1}
If $\deg(\a_{d+j})\geq1$ for some $1\leq j\leq s$, then 
$$\x^{n\a_{d+j}}\in(\x^{\a_1},\dots,\x^{\a_d})\fm^{n-1}$$
for $n\geq e_j$.
\end{lem}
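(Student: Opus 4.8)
The plan is to reduce the claim to a membership statement inside the semigroup $S$. Since $\fm=\KK\{M\}$, we have $\fm^{n-1}=\KK\{(n-1)M\}$, and therefore $(\x^{\a_1},\dots,\x^{\a_d})\fm^{n-1}=\KK\{\{\a_1,\dots,\a_d\}+(n-1)M\}$; recall also that $\b\in(n-1)M$ precisely when $\ord_S(\b)\geq n-1$. Thus it is enough to show that $n\a_{d+j}\in\{\a_1,\dots,\a_d\}+(n-1)M$ whenever $n\geq e_j$, and I would do this by writing down an explicit expression of $n\a_{d+j}$ of the required form.

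First I would record the numerical facts we need. From $\deg(\a_{d+j})=e_j/c_j\geq1$ we get $e_j\geq c_j$, and moreover $e_j\geq2$: if $e_j=1$ then $c_j\a_{d+j}=\a_{i_0}$ for some $i_0\in\{1,\dots,d\}$, so $\a_{d+j}=\tfrac{1}{c_j}\a_{i_0}$ is a nonzero vector of $S$ on the extremal ray through $\a_{i_0}$, which forces $\tfrac{1}{c_j}\geq1$ and hence $\a_{d+j}=\a_{i_0}$, contradicting that $\a_1,\dots,\a_{d+s}$ are distinct minimal generators. So for $n\geq e_j$ we have both $n\geq c_j$ and $n\geq2$. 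Now pick $i_0$ with $l_{j_{i_0}}\geq1$ (possible since $c_j\a_{d+j}\neq\0$) and use $c_j\a_{d+j}=\sum_{i=1}^dl_{j_i}\a_i$ to split off one copy of $\a_{i_0}$:
$$n\a_{d+j}=(n-c_j)\a_{d+j}+c_j\a_{d+j}=\a_{i_0}+\w,\qquad \w:=(n-c_j)\a_{d+j}+(l_{j_{i_0}}-1)\a_{i_0}+\sum_{i\neq i_0}l_{j_i}\a_i\in S.$$
The displayed formula exhibits $\w$ as a sum of $(n-c_j)+(e_j-1)$ generators of $S$, and $(n-c_j)+(e_j-1)\geq n-1$ holds exactly because $e_j\geq c_j$; hence $\ord_S(\w)\geq n-1$, that is $\w\in(n-1)M$, and so $n\a_{d+j}=\a_{i_0}+\w\in\{\a_1,\dots,\a_d\}+(n-1)M$, which gives the claim.

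I do not anticipate a genuine obstacle: the argument is a one-line computation once the set-up is fixed. The points that need a little care are the inequality $e_j\geq2$ (so that the extreme case $n=e_j$ still gives $n-1\geq1$, and $\w$ is a nonempty sum of generators, hence lies in $M$ when $n=2$), and being careful with the dictionary between monomial ideals of $\KK[S]$ and ideals of $S$, together with the elementary fact that any element of $S$ admitting an expression in at least $m\geq1$ generators lies in $mM$.
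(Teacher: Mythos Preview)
Your proof is correct and follows essentially the same approach as the paper: both decompose $n\a_{d+j}=c_j\a_{d+j}+(n-c_j)\a_{d+j}=\sum_i l_{j_i}\a_i+(n-c_j)\a_{d+j}$, peel off one extremal generator, and observe that the remaining $(e_j-1)+(n-c_j)\geq n-1$ summands place the leftover in $\fm^{n-1}$. The paper carries this out directly with monomials in one displayed line, while you phrase it in the semigroup and add the verification that $e_j\geq 2$; this extra care is not needed for the paper's formulation (where $\fm^0=R$ handles the borderline automatically), but it does no harm.
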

\begin{proof}
Let $I=(\x^{\a_1},\dots,\x^{\a_d})$. For every $n \geq e_j \geq c_j$,
\[
\x^{n\a_{d+j}}=\x^{c_j\a_{d+j}}\x^{(n-c_j)\a_{d+j}}=(\x^{l_{j_1}\a_1}
\cdots\x^{l_{j_d}\a_d})\x^{(n-c_j)\a_{d+j}}\in I\fm^{t}, 
\]
where $t=\sum^d_{i=1}l_{i_j}-1+n-c_j\geq n-1$. It follows that $\x^{n\a_{d+j}}\in I\fm^{n-1}$.
\end{proof}
	
 In the following theorem we characterize when there exists a monomial minimal reduction ideal of $\fm$.

  \begin{thm}\label{monomial red}
  	The following statements are equivalent:
  	\begin{enumerate}
  		\item[(a)] $\deg(\a_{d+j})\geq 1$, for every $j=1,\dots,s$;
  		\item[(b)]  The ideal $I=(\x^{\a_1},\dots,\x^{\a_d})$ is a  reduction of $\fm$;
  	  	\end{enumerate} 
  	If the above statements hold, the reduction number of $\fm$ with respect to $I$ is at most $sl-1$, where $l=\max\{e_1,\dots,e_s\}$.  
  \end{thm}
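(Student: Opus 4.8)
The plan is to prove the equivalence $(a)\Leftrightarrow(b)$ and then the reduction-number bound, using Lemma~\ref{1-1} as the main engine. For $(a)\Rightarrow(b)$, assume $\deg(\a_{d+j})\geq 1$ for all $j$. Set $I=(\x^{\a_1},\dots,\x^{\a_d})$. Since $\fm^{n}$ is spanned over $\KK$ by the monomials $\x^{\z}$ with $\ord_S(\z)\geq n$, and each such $\z$ has a maximal expression $\z=\sum_{i=1}^d r_i\a_i+\sum_{j=1}^s t_j\a_{d+j}$ with $\sum r_i+\sum t_j=\ord_S(\z)\geq n$, I would argue that once $n$ is large enough, any such monomial lies in $I\fm^{n-1}$. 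If some $r_i\geq 1$, then $\x^{\z}=\x^{\a_i}\x^{\z-\a_i}$ with $\ord_S(\z-\a_i)\geq n-1$, so $\x^{\z}\in I\fm^{n-1}$ trivially. Otherwise all $r_i=0$, so $\sum_j t_j\geq n$; then some $t_j\geq n/s$, and I apply Lemma~\ref{1-1}: writing $\x^{\z}=\x^{t_j\a_{d+j}}\cdot\x^{\z-t_j\a_{d+j}}$, once $t_j\geq e_j$ we get $\x^{t_j\a_{d+j}}\in I\fm^{t_j-1}$, hence $\x^{\z}\in I\fm^{t_j-1}\fm^{\ord_S(\z-t_j\a_{d+j})}\subseteq I\fm^{n-1}$, provided $\ord_S(\z-t_j\a_{d+j})\geq n-t_j$, which holds because orders are superadditive (an expression of $\z$ minus the chosen one gives an expression of the difference). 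The precise threshold: $t_j\geq l\geq e_j$ suffices, and $t_j\geq n/s$, so $n\geq sl$ forces the conclusion, giving $\fm^{n+1}=I\fm^n$ for $n\geq sl-1$; this simultaneously proves $(b)$ and the bound $r_I(R)\leq sl-1$.

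For $(b)\Rightarrow(a)$, I argue by contraposition: suppose $\deg(\a_{d+j})<1$ for some $j$. Then for every $n$, $\x^{n\a_{d+j}}\in\fm^n$ but I claim $\x^{n\a_{d+j}}\notin I\fm^{n-1}$ for infinitely many $n$, contradicting that $I$ is a reduction. Indeed, any monomial in $I\fm^{n-1}$ is $\x^{\w}$ with $\w=\a_i+\v$ for some $i$ and $\ord_S(\v)\geq n-1$; if $\x^{\w}=\x^{n\a_{d+j}}$, then $\w=n\a_{d+j}$ and comparing degrees $n\deg(\a_{d+j})=\deg(\w)=\deg(\a_i)+\deg(\v)=1+\deg(\v)\geq 1+\ord_S(\v)\cdot\min_k\deg(\a_k)$... this needs care, so instead I would use the cleaner fact that $n\a_{d+j}$ having an expression using some extremal ray $\a_i$ forces, after subtracting, an element of $S$ lying strictly inside $\co(S)$, and iterate; the key point is that $\deg$ is additive and $\deg(n\a_{d+j})=n\deg(\a_{d+j})$ grows slower than $n$, whereas any element of $\fm^n\setminus(\text{multiples of }\a_{d+j}\text{ alone})$ written with an extremal generator has degree at least $1+(n-1)\delta$ where $\delta=\min_k\deg(\a_k)$; choosing $n$ with $n\deg(\a_{d+j})<1$ is impossible, so I instead compare against $r_I(R)$: if $I$ were a reduction with reduction number $r$, then by Remark~\ref{rem:red} every $\x^{\z}$ with $\ord_S(\z)\geq r+1$ is divisible by some $\x^{\a_i}$; but $\x^{c_j(r+1)\a_{d+j}}=\x^{\sum l_{j_i}\a_i \cdot(r+1)}$ need not give a contradiction directly — rather, take $\z$ with $\ord_S(\z)$ large whose only maximal expression uses only $\a_{d+j}$, which exists precisely because $\deg(\a_{d+j})<1$ makes high multiples of $\a_{d+j}$ have order equal to their multiple while no extremal generator can appear in any expression of the same total degree. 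This is the step I expect to be most delicate, and I would isolate it as the crux.

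The main obstacle is therefore the direction $(b)\Rightarrow(a)$: making rigorous the claim that when $\deg(\a_{d+j})<1$, the powers $\x^{n\a_{d+j}}$ escape $I\fm^{n-1}$. The clean way is: $\x^{n\a_{d+j}}\in I\fm^{n-1}$ would force $n\a_{d+j}-\a_i\in S$ for some $i$ (since a monomial divisible by $\x^{\a_i}$ corresponds to a semigroup element with $n\a_{d+j}-\a_i\in S$), but then $\deg(n\a_{d+j}-\a_i)=n\deg(\a_{d+j})-1$ must be the degree of an element of $S$, hence nonnegative, and moreover one checks that iterating (each time the quotient stays in $S$ with degree dropping by the degree of an extremal ray, i.e.\ by $1$) is incompatible with $n\deg(\a_{d+j})$ being bounded appropriately — concretely, for $I$ to be a reduction we'd need this for ALL large $n$, and choosing $n$ so that $\{n\deg(\a_{d+j})\}$ (fractional part) avoids the finitely many degrees arising from $P_S\cap\G(S)$ yields the contradiction. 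Once both implications are in hand, the bound $r_I(R)\leq sl-1$ has already been extracted from the $(a)\Rightarrow(b)$ argument above, completing the proof.
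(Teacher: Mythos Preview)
Your argument for $(a)\Rightarrow(b)$ and the bound $r_I(R)\leq sl-1$ is correct and matches the paper's approach: pigeonhole on a maximal expression to find some $t_j\geq n/s\geq l\geq e_j$, then invoke Lemma~\ref{1-1}.

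The direction $(b)\Rightarrow(a)$ has a genuine gap, and the fix is simpler than any of the routes you explore. Your first attempt---comparing degrees---is the right idea, and you correctly reach
\[
n\deg(\a_{d+j}) \;=\; 1+\deg(\v)\;\geq\; 1+(n-1)\min_k\deg(\a_k).
\]
The reason this ``needs care'' is only that you chose an \emph{arbitrary} $j$ with $\deg(\a_{d+j})<1$. If instead you argue directly (not by contraposition) and pick $t$ with $\deg(\a_{d+t})=\min_{1\leq j\leq s}\deg(\a_{d+j})$, then $\min_k\deg(\a_k)=\min\{1,\deg(\a_{d+t})\}$, and the inequality becomes
\[
n\deg(\a_{d+t})\;\geq\;1+(n-1)\min\{1,\deg(\a_{d+t})\}.
\]
If $\deg(\a_{d+t})<1$ this reads $n\deg(\a_{d+t})\geq 1+(n-1)\deg(\a_{d+t})$, i.e.\ $\deg(\a_{d+t})\geq 1$, a contradiction. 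Hence $\deg(\a_{d+t})\geq 1$, and by minimality all $\deg(\a_{d+j})\geq 1$. This is exactly what the paper does, in three lines.

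Your subsequent attempts (iterating subtractions of extremal rays, invoking Remark~\ref{rem:red}, or a fractional-part argument against $P_S\cap\G(S)$) are unnecessary and, as written, do not close. In particular, your claim that $\x^{n\a_{d+j}}\notin I\fm^{n-1}$ for infinitely many $n$ is not obviously true when there exists some other $\a_{d+k}$ with $\deg(\a_{d+k})<\deg(\a_{d+j})$: the element $n\a_{d+j}-\a_i$ may then have order well exceeding $n-1$ via expressions using many copies of $\a_{d+k}$. The minimality trick sidesteps this entirely.
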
 
  \begin{proof}
  	
  	(a)$\Rightarrow$(b)  We may assume that $s\geq 1$, otherwise the claim is trivial. Let $n\geq\max\{e_1,\dots,e_s\}$. Then, $\x^{n\a_{d+j}}\in I\fm^{n-1}$ by Lemma~\ref{1-1}.  
  	For every monomial $\u=\x^{n_1\a_{d+1}}\cdots\x^{n_s\a_{d+s}}\in\fm^{sn}$ there is at least one index $j$ such that $n_j \geq n$. Therefore, $\u\in \x^{n\a_{d+j}}\fm^{(s-1)n}\subseteq I\fm^{n-1}\fm^{(s-1)n} \subseteq I\fm^{sn-1}$ and this is enough to show that $\fm^{rn} \subseteq I\fm^{rn-1}$.
 Hence, $I$ is a reduction ideal of $\fm$.
This also proves the last statement, when (a) holds.
  	
  	(b)$\Rightarrow$(a)
Fix $t$ such that $\deg(\a_{d+t})=\min\{\deg(\a_{d+j}) \, ; \, j=1,\dots,s\}$. If $n$ is large enough, (b) implies that
  	\[
  	\x^{(n+1)\a_{d+t}}=\x^{\a_{j}}\x^{\u_1+\dots+\u_n},
  	\] for some $1\leq j\leq d$ and $\u_1,\dots,\u_n\in S$. Consequently, 
  	\[
  	(n+1)\deg(\a_{d+t})=1+\sum^n_{i=1}\deg(\u_i)\geq 1+n\min\{1,\deg(\a_{d+t})\},
  	\]
  	which implies $\deg(\a_{d+t})\geq 1$.
  \end{proof}
  
\begin{rem}\label{D}
 Denote by $\DD$ the hyperplane passing through the extremal rays $\a_1,\dots,\a_d$, and by $\PP$ the convex hull of the  set $\{\0,\a_1,\dots,\a_d\}$. As a geometrical interpretation, Theorem~\ref{monomial red} states that   $(\x^{\a_1},\dots,\x^{\a_d})$ is a monomial reduction ideal  of $\fm$ precisely when  $\PP\cap\{\a_{d+1},\dots,\a_{d+s}\}=\DD\cap\{\a_{d+1},\dots,\a_{d+s}\}$. In other words, $\a_i$ is located on or above $\DD$, for $i=d+1,\dots,d+s$.  	
\end{rem}

The following example shows that the bound for the reduction number found in Theorem~\ref{monomial red} is sharp.

 \begin{exmp}\label{exm-mon-red}
Let $S \subseteq \mathbb{N}^2$ be the affine semigroup generated by $\a_1=(3,1),\a_2=(0,4),\a_3=(2,2)$, where  $\a_1$ and $\a_2$ are the extremal rays.  Then, $\a_3=2/3\a_1+1/3\a_2$ and $\deg(\a_3)=1$.
Therefore, $(\x^{\a_1},\x^{\a_2})$ is a minimal reduction ideal of $\fm=(\x^{\a_1},\x^{\a_2},\x^{\a_3})$.  
Moreover, in this case $e_3=3$ and the reduction number is $2$, which confirms the sharpness of the bound given in Theorem~\ref{monomial red}.
  \end{exmp}

 If $R$ is Cohen-Macaulay, then $e(R)\geq {\rm edim}(R)-\dim(R)+1= d+\codim(R)-d+1=\codim(R)+1$, where ${\rm edim}(R)$ is the embedding dimension of $R$ and $\codim(R)=d-{\rm edim}(R)=s$, see \cite{Abhyankar-1967}. If the equality holds, then $R$ is said to be a Cohen-Macaulay ring of minimal multiplicity.

 \begin{prop}\label{min red and min mult}
 If $R$ is Cohen-Macaulay, the following statements are equivalent:
 	\begin{enumerate}
 		\item[(a)] $\Ap(S,E)$ has $1+\codim(R)$ elements; 
 		\item[(b)]  $(\x^{\a_1},\dots,\x^{\a_d})$ is a reduction ideal of $\fm$ and $R$ is of minimal multiplicity, i.e., $e(R)=1+\codim(R)$;
 		\item[(c)] $(\x^{\a_1},\dots,\x^{\a_d})\fm=\fm^2$.
 	\end{enumerate} 
 \end{prop}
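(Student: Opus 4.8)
The plan is to prove the cycle (a) $\Rightarrow$ (c) $\Rightarrow$ (b) $\Rightarrow$ (a), using Proposition~\ref{prop:mon} and Lemma~\ref{eqm} to move freely between the multiplicity, reduction, and Cohen--Macaulay statements, and translating the ideal-theoretic condition (c) into a semigroup-theoretic statement about $\Ap(S,E)$. Throughout set $I=(\x^{\a_1},\dots,\x^{\a_d})$ and $H=\{\a_1,\dots,\a_d\}+S$, so that $\fm=\KK\{M\}$, $I=\KK\{H\}$, and $|\Ap(S,E)|=|M\setminus H|+1$ (adding $\0$), while $I\fm=\KK\{H+M\}$ and $\fm^2=\KK\{2M\}$; note $H+M\subseteq 2M$ always. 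Since $R$ is Cohen--Macaulay, by Proposition~\ref{prop:mon} condition (a) is equivalent to ``$I$ is a reduction ideal of $\fm$'', and by Lemma~\ref{eqm}(b), $e(R)=|\Ap(S,E)|$; so (a) literally says $e(R)=1+\codim(R)$, i.e.\ minimal multiplicity, and (a)$\Leftrightarrow$(b) is then immediate from Proposition~\ref{prop:mon} once we know the reduction property. Thus the real content is relating (c) to the others.

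For (c)$\Rightarrow$(b): if $I\fm=\fm^2$ then trivially $\fm^{n+1}=I\fm^n$ for all $n\geq1$, so $I$ is a reduction ideal of $\fm$ (indeed with reduction number $1$); by Proposition~\ref{prop:mon} this gives $e(R)=|\Ap(S,E)|$, and we must also show this common value equals $1+\codim(R)=1+s$. Here I would argue at the semigroup level: $I\fm=\fm^2$ means $2M\subseteq H+M$, i.e.\ every element of $M$ of order $\geq 2$ already lies in $H$. Consequently every nonzero Apéry element of $S$ with respect to $E$ has order exactly $1$, hence is one of the minimal generators $\a_{d+1},\dots,\a_{d+s}$ (the extremal rays $\a_1,\dots,\a_d$ are not in $\Ap(S,E)$). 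Therefore $\Ap(S,E)\subseteq\{\0,\a_{d+1},\dots,\a_{d+s}\}$, giving $|\Ap(S,E)|\leq 1+s$; combined with $e(R)=|\Ap(S,E)|\geq 1+\codim(R)=1+s$ this forces equality, which is (b) (and also (a)).

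For (a)$\Rightarrow$(c): assuming minimal multiplicity, $|\Ap(S,E)|=1+s$, so by a counting/length argument the nonzero Apéry elements are \emph{exactly} $\a_{d+1},\dots,\a_{d+s}$, each of order $1$, and in particular $\Ap(S,E)$ contains no element of order $\geq 2$. I claim this forces $2M\subseteq H+M$. Take $\a\in 2M$; write $\a=\sum_{i=1}^d n_i\a_i+\w$ with $\w\in\Ap(S,E)$ and $n_i\in\NN$ (the standard Apéry decomposition). If some $n_i\geq1$ then $\a\in H\subseteq H+M$. Otherwise $\a=\w\in\Ap(S,E)$, forcing $\ord_S(\a)=1$, contradicting $\a\in 2M$. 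Hence $2M\subseteq H$; since also $2M\subseteq M$ and $H\cap(\text{elements of positive order})\subseteq H+M$ (as $H=\{\a_1,\dots,\a_d\}+S$ and any element of $H$ is $\a_i+\bs$ with $\bs\in S$, which lies in $H+M$ provided $\bs\neq\0$, and if $\bs=\0$ then $\a=\a_i\notin 2M$), we get $2M\subseteq H+M$, i.e.\ $\fm^2\subseteq I\fm$, hence $\fm^2=I\fm$.

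The main obstacle I anticipate is the bookkeeping around the Apéry decomposition in the Cohen--Macaulay setting: one must be careful that ``$R$ Cohen--Macaulay'' guarantees (via Lemma~\ref{lemAp}(b) and $e(R)=|\Ap(S,E)|$) that the decomposition $\a=\sum n_i\a_i+\w$ with $\w\in\Ap(S,E)$ behaves well — in particular that an element of $2M$ cannot coincide with a nonzero Apéry element when those have order $1$ — and that the finitely many nonzero Apéry elements are forced to be precisely the non-extremal minimal generators rather than merely being $s$ in number. This identification (nonzero Apéry set $=\{\a_{d+1},\dots,\a_{d+s}\}$) is the crux; once it is in hand, all three implications fall out from the length formula \eqref{lambda}, Lemma~\ref{eqm}, and Proposition~\ref{prop:mon}.
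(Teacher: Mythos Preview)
Your cycle $(a)\Rightarrow(c)\Rightarrow(b)\Rightarrow(a)$ works and is a genuinely different route from the paper's. One caveat first: the expository paragraph before the cycle misquotes two results. Lemma~\ref{eqm}(b) gives $e(I)=|\Ap(S,E)|$ under the Cohen--Macaulay hypothesis, not $e(R)=|\Ap(S,E)|$, and Proposition~\ref{prop:mon} does not say that $|\Ap(S,E)|=1+s$ is equivalent to $I$ being a reduction. These slips make that paragraph circular as written, but they are not used in the cycle itself, which is sound: in $(c)\Rightarrow(b)$ you first obtain the reduction property from $\fm^2=I\fm$ and only then invoke Proposition~\ref{prop:mon} legitimately, and in $(a)\Rightarrow(c)$ you use only the set equality $\Ap(S,E)=\{\0,\a_{d+1},\dots,\a_{d+s}\}$, which follows from the always-valid inclusion $\supseteq$ together with the cardinality hypothesis (no Cohen--Macaulay input is needed here, so the obstacle you anticipated does not arise). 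You should also clean up the line ``$\a\in H\subseteq H+M$'' in your $(a)\Rightarrow(c)$ argument, since $H\not\subseteq H+M$; your subsequent correction is right, so just fold it in directly.

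The paper instead proves $(a)\Rightarrow(b)\Rightarrow(c)\Rightarrow(a)$. Its $(a)\Rightarrow(b)$ is the substantive step: from $\Ap(S,E)=\{\0,\a_{d+1},\dots,\a_{d+s}\}$ it shows $\deg(\a_{d+j})\geq 1$ for all $j$ via a short degree computation on $2\a_{d+t}$ (where $\a_{d+t}$ minimizes $\deg$ among the non-extremal generators), and then appeals to Theorem~\ref{monomial red} to conclude that $I$ is a reduction. You avoid Theorem~\ref{monomial red} and the degree function entirely by routing through $(c)$; what you use instead is the trivial bound $|\Ap(S,E)|\geq 1+s$ coming from $\{\0,\a_{d+1},\dots,\a_{d+s}\}\subseteq\Ap(S,E)$, together with Proposition~\ref{prop:mon}. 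The paper's $(b)\Rightarrow(c)$ is essentially your $(a)\Rightarrow(c)$, and its $(c)\Rightarrow(a)$ is the observation you make inside your $(c)\Rightarrow(b)$.
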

 \begin{proof}
 	(a) $\Rightarrow$ (b) By assumption, we have $\AP(S,E)=\{\0,\a_{d+1},\dots,\a_{d+s}\}$. Let $\a_{d+t}$ be such that $\deg(\a_{d+t})=\min\{\deg(\a_{d+j}) \, ; \, 1\leq j\leq s\}$. Since $2\a_{d+t}\notin\cap_{i=1}^{d}\AP(S,\a_i)$, we have $2\a_{d+t}=\a_i+\b$, for some $\b\in S$ and some $i$. Then, $2\deg(\a_{d+t})=1+\deg(\b)\geq 1+\min\{1,\deg(\a_{d+t})\}$, which implies $\deg(\a_{d+t})\geq 1$. Therefore, $(\x^{\a_1},\dots,\x^{\a_d})$ is a minimal reduction ideal of $\fm$, by Proposition~\ref{monomial red}. The equality $e(R)=1+\codim(R)$ follows by Proposition~\ref{prop:mon}. 	
 	
 	(b) $\Rightarrow$ (c) By Lemma~\ref{mm},  the ideal $(\x^{\a_1},\dots,\x^{\a_d})$ is a minimal reduction ideal of $\fm$. Since $e(R)=|\Ap(S,E)|$ by Proposition~\ref{prop:mon}, we have $|\Ap(S,E)|=1+\codim(R)$, which is $\Ap(S,E)=\{\0,\a_{d+1},\dots,\a_{d+s}\}$. In particular, for any $\x^\a\in\fm^2$, we have $\a\notin\Ap(S,E)$,  which means $\a=\a_i+\b$ for $1 \geq i \geq d$ and some $\b \in S \setminus \{0\}$. Therefore, $\x^\a\in(\x^{\a_1},\dots,\x^{\a_d})\mathfrak{m}$.
 	
 	(c) $\Rightarrow$ (a) Clearly, $\AP(S,E)=\{\0,\a_{d+1},\dots,\a_{d+s}\}$.
 \end{proof}

In Theorem \ref{monomial red} we have seen that if $\deg(\a_{d+j}) \geq 1$ for every $j=1, \dots, s$, then $(x^{\a_1}, \dots, \x^{\a_{d}})$ is a minimal reduction ideal of $\fm$. On the other hand, if $\deg(\a_{d+j}) < 1$ for $j=1, \dots, i$ and $\deg(\a_{d+j}) \geq 1$ for $j=i, \dots, s$, the same proof shows that $J=(x^{\a_1}, \dots, \x^{\a_{d+i}})$ is a reduction ideal of $\fm$, but it is never minimal if $i\geq 1$. We conclude this section by providing a better reduction ideal in this case, indeed the ideal that we find is always contained in $J$.

\begin{lem}\label{majic}
	If $1\leq j\leq s$, then $\x^{e_j\a_i}\in(\x^{\a_1}-\x^{\a_2},\dots,\x^{\a_1}-\x^{\a_d},\x^{\a_{d+j}})\fm^{r_j-1}$ for every $1\leq i\leq d$, where $r_j=\min\{e_j,c_j\}$.
\end{lem}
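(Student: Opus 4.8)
The plan is to work modulo the ideal $N=(\x^{\a_1}-\x^{\a_2},\dots,\x^{\a_1}-\x^{\a_d})$, inside which the monomials $\x^{\a_1},\dots,\x^{\a_d}$ become interchangeable, while carefully tracking powers of $\fm$. Write $L_j=(\x^{\a_1}-\x^{\a_2},\dots,\x^{\a_1}-\x^{\a_d},\x^{\a_{d+j}})$ for the ideal in the statement, so that $N\subseteq L_j$ and $(\x^{\a_{d+j}})\subseteq L_j$. Note also that $e_j\ge 1$ (since $c_j\a_{d+j}\ne\0$) and $c_j\ge 1$, hence $r_j\ge1$, so every exponent of $\fm$ appearing below is nonnegative.

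The core is an elementary telescoping observation: if $\b=\sum_{i=1}^d k_i\a_i$ with $k:=\sum_{i=1}^d k_i\ge1$, then $\x^{\b}-(\x^{\a_1})^{k}\in N\fm^{k-1}$. I would prove this by induction on the number of factors of $\x^{\b}=\prod_{i=1}^d(\x^{\a_i})^{k_i}$ that are not $\x^{\a_1}$, namely $\sum_{i\ge2}k_i$: if this number is $0$ the difference is $0$; otherwise pick $i_0\ge2$ with $k_{i_0}\ge1$, write $\x^{\b}=\mu\,\x^{\a_{i_0}}$ where $\mu$ is a product of $k-1$ of the generators $\x^{\a_1},\dots,\x^{\a_d}$ (so $\mu\in\fm^{k-1}$), and use $\x^{\b}=\mu(\x^{\a_{i_0}}-\x^{\a_1})+\mu\,\x^{\a_1}$: the first summand lies in $N\fm^{k-1}$, and $\mu\,\x^{\a_1}$ is again a product of $k$ of the $\x^{\a_i}$ with one fewer non-$\x^{\a_1}$ factor, so the inductive hypothesis gives $\mu\,\x^{\a_1}-(\x^{\a_1})^{k}\in N\fm^{k-1}$. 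The same idea applied to a single pair yields, for $1\le i\le d$ and $m\ge1$,
\[
(\x^{\a_i})^{m}-(\x^{\a_1})^{m}=(\x^{\a_i}-\x^{\a_1})\sum_{t=0}^{m-1}(\x^{\a_i})^{t}(\x^{\a_1})^{m-1-t}\in(\x^{\a_1}-\x^{\a_i})\fm^{m-1}.
\]

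Now I specialize the telescoping identity to $\b=c_j\a_{d+j}=\sum_{i=1}^d l_{j_i}\a_i$, for which $k=e_j$ and $\x^{\b}=(\x^{\a_{d+j}})^{c_j}$; it gives $(\x^{\a_1})^{e_j}-(\x^{\a_{d+j}})^{c_j}\in N\fm^{e_j-1}$. Since $(\x^{\a_{d+j}})^{c_j}=\x^{\a_{d+j}}(\x^{\a_{d+j}})^{c_j-1}\in(\x^{\a_{d+j}})\fm^{c_j-1}$ and $\fm^{e_j-1}\subseteq\fm^{r_j-1}$, $\fm^{c_j-1}\subseteq\fm^{r_j-1}$, we obtain
\[
\x^{e_j\a_1}=(\x^{\a_1})^{e_j}\in N\fm^{e_j-1}+(\x^{\a_{d+j}})\fm^{c_j-1}\subseteq L_j\fm^{r_j-1}.
\]
Finally, for $2\le i\le d$ the single-pair identity with $m=e_j$ gives $(\x^{\a_i})^{e_j}-(\x^{\a_1})^{e_j}\in(\x^{\a_1}-\x^{\a_i})\fm^{e_j-1}\subseteq L_j\fm^{r_j-1}$, and adding this to the previous inclusion yields $\x^{e_j\a_i}=(\x^{\a_i})^{e_j}\in L_j\fm^{r_j-1}$ for every $1\le i\le d$, as desired.

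There is no real obstacle; the only point requiring care is the bookkeeping of the exponent of $\fm$. Each elementary swap $\x^{\a_{i_0}}\to\x^{\a_1}$ and each extraction of a factor $\x^{\a_{d+j}}$ costs exactly one generator of $\fm$, so the $N$-part of $\x^{e_j\a_1}$ survives in $\fm^{e_j-1}$ while the $(\x^{\a_{d+j}})$-part survives in $\fm^{c_j-1}$; taking the smaller of $e_j-1$ and $c_j-1$, that is $r_j-1$, is exactly what makes the conclusion uniform. The degenerate cases $c_j=1$, or $e_j\le c_j$, or $e_j\ge c_j$ need no separate treatment, since the inclusions $e_j-1\ge r_j-1$ and $c_j-1\ge r_j-1$ hold in all of them.
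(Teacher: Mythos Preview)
Your proof is correct and follows essentially the same approach as the paper. Both arguments reduce first to the case $i=1$ via the factorization $(\x^{\a_i})^{e_j}-(\x^{\a_1})^{e_j}\in(\x^{\a_1}-\x^{\a_i})\fm^{e_j-1}$, and then both show $(\x^{\a_1})^{e_j}\equiv \x^{l_{j_1}\a_1}\cdots\x^{l_{j_d}\a_d}=(\x^{\a_{d+j}})^{c_j}$ modulo $N\fm^{e_j-1}$ by a telescoping swap of the factors $\x^{\a_1}\to\x^{\a_i}$ one at a time; the paper writes out the telescoping sum explicitly while you package it as an induction, but the content is identical.
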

\begin{proof}
	Let $I=(\x^{\a_1}-\x^{\a_2},\dots,\x^{\a_1}-\x^{\a_d},\x^{\a_{d+j}})$. Since $\x^{e_j\a_1}-\x^{e_j\a_i}\in I\fm^{e_j-1}$ for every $i$, it is enough to show that $\x^{e_j\a_1}\in I\fm^{r_j-1}$.
	We recall that $c_j\a_{d+j}=\sum^d_{i=1}l_{j_i}\a_i$ and $e_j=\sum^d_{i=1}l_{j_i}$. We may write
	\begin{eqnarray*}
		\x^{e_j\a_1} &=& \x^{(e_j-l_{j_2})\a_1}(\x^{l_{j_2}\a_1}-\x^{l_{j_2}\a_2})  +\x^{(e_j-l_{j_2})\a_1}\x^{l_{j_2}\a_2}= \\
		&=&  \x^{(e_j-l_{j_2})\a_1}(\x^{l_{j_2}\a_1}-\x^{l_{j_2}\a_2})      + 
		\x^{(e_j-l_{j_2}-l_{j_3})\a_1}\x^{l_{j_2}\a_2}
		(\x^{l_{j_3}\a_1}-\x^{l_{j_3}\a_3})+ \\
		& & + \ \x^{(e_j-l_{j_2}-l_{j_3})\a_1}\x^{l_{j_2}\a_2}\x^{l_{j_3}\a_3}=
		\\
		&=& \sum_{i=2}^d\x^{(e_j-\sum_{t=2}^il_{j_t})\a_1}\x^{l_{j_2}\a_2}\cdots\x^{l_{j_{i-1}}\a_{i-1}}(		\x^{l_{j_i}\a_1}-\x^{l_{j_i}\a_i}) + \x^{l_{j_1}\a_1} \cdots \x^{l_{j_d}\a_d},
	\end{eqnarray*}
	which easily implies the thesis.
\end{proof}

\begin{prop}
	Let $T=\{\a_{d+j_1},\dots,\a_{d+j_l}\}$ be a non-empty set  such that \[\{\a_{d+j} \, ; \, \deg(\a_{d+j})< 1\}\subseteq T\subseteq\{\a_{d+j} \, ; \,  \deg(\a_{d+j})\leq 1\}.
	\]
	Then, $J=(\x^{\a_1}-\x^{\a_2},\dots,\x^{\a_1}-\x^{\a_d},\x^{\a_{d+j_1}},\dots,\x^{\a_{d+j_l}})$ is a reduction ideal for $\fm$. 
\end{prop}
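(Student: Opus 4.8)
The plan is to derive $J$ from the ideal
\[
K=(\x^{\a_1},\dots,\x^{\a_d},\x^{\a_{d+j_1}},\dots,\x^{\a_{d+j_l}}),
\]
for which the reduction property is essentially already known, by gaining control of the one extra generator $\x^{\a_1}$ through a power trick.

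First I would note that $K=J+(\x^{\a_1})$: the identity $\x^{\a_i}=\x^{\a_1}-(\x^{\a_1}-\x^{\a_i})$ gives the inclusion $K\subseteq J+(\x^{\a_1})$, and the reverse inclusion is immediate from the generators. Next, since by hypothesis $T$ contains every $\a_{d+j}$ with $\deg(\a_{d+j})<1$, the ideal $K$ contains the ideal $L$ generated by $\x^{\a_1},\dots,\x^{\a_d}$ together with all $\x^{\a_{d+j}}$ for which $\deg(\a_{d+j})<1$; and $L$ is a reduction of $\fm$ by the argument of Theorem~\ref{monomial red} recalled in the paragraph preceding Lemma~\ref{majic}. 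Since any ideal squeezed between a reduction ideal of $\fm$ and $\fm$ itself is again a reduction ideal of $\fm$, it follows that $K$ is a reduction of $\fm$; fix $r$ with $\fm^{n+1}=K\fm^n$ for all $n\geq r$.

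The second ingredient is that some power of $\x^{\a_1}$ already lies in $J$ times a power of $\fm$. As $T\neq\emptyset$, choose $j_0$ with $\a_{d+j_0}\in T$ and set $c=e_{j_0}$. Since $\a_{d+j_0}\in T$ forces $\deg(\a_{d+j_0})\leq 1$, that is $e_{j_0}\leq c_{j_0}$, we get $r_{j_0}=\min\{e_{j_0},c_{j_0}\}=c$, so Lemma~\ref{majic} applied with $j=j_0$ and $i=1$ gives
\[
\x^{c\a_1}\in(\x^{\a_1}-\x^{\a_2},\dots,\x^{\a_1}-\x^{\a_d},\x^{\a_{d+j_0}})\,\fm^{c-1}\subseteq J\fm^{c-1}.
\]

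Finally I would combine the two facts. For $n\geq r$, iterating $\fm^{m+1}=K\fm^m$ yields $\fm^{n+c}=K^c\fm^n$; expanding $K^c=(J+(\x^{\a_1}))^c\subseteq JK^{c-1}+(\x^{c\a_1})$ and using $K^{c-1}\subseteq\fm^{c-1}$ together with $\x^{c\a_1}\in J\fm^{c-1}$, I obtain
\[
\fm^{n+c}=K^c\fm^n\subseteq JK^{c-1}\fm^n+\x^{c\a_1}\fm^n\subseteq J\fm^{n+c-1}.
\]
As $J\fm^{n+c-1}\subseteq\fm^{n+c}$ always holds, this is an equality, whence $\fm^{m}=J\fm^{m-1}$ for every $m\geq r+c$, which is precisely the assertion that $J$ is a reduction ideal of $\fm$. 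The only step requiring a little care is the first one, namely that $K$ genuinely inherits the reduction property from $L$ (if one prefers, this can be reproved directly by the pigeonhole argument used for Theorem~\ref{monomial red}(a)$\Rightarrow$(b)); after that the argument is a short formal manipulation of ideals with no new obstacle.
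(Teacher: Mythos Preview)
Your proof is correct, but it takes a genuinely different route from the paper's. The paper argues directly at the level of monomials: setting $n=\max\{e_1,\dots,e_s\}$, it shows that every generator's $N$-th power lies in $J\fm^{N-1}$ for a suitable common $N$. For the extremal rays $\a_i$ this is Lemma~\ref{majic}; for $\a_{d+j}\in T$ it is trivial since $\x^{\a_{d+j}}\in J$; and for $\a_{d+j}\notin T$ (so $\deg(\a_{d+j})>1$), Lemma~\ref{1-1} gives $\x^{n\a_{d+j}}=\x^{\a_i}\x^{\u}$ with $\x^{\u}\in\fm^{n-1}$, and raising this to the $n$-th power feeds back into the extremal-ray case to yield $\x^{n^2\a_{d+j}}\in J\fm^{n^2-1}$. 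Your approach instead passes through the intermediate monomial ideal $K=J+(\x^{\a_1})$: you first observe that $K$ is a reduction (a fact the paper records in passing), then invoke Lemma~\ref{majic} \emph{once}, for $\x^{c\a_1}$ alone, and finish with the binomial expansion $K^c\subseteq JK^{c-1}+(\x^{c\a_1})$. The trade-off is that the paper's monomial-by-monomial argument is more self-contained and in principle gives an explicit bound on $r_J(\fm)$, while your argument is shorter and more structural, needing Lemma~\ref{majic} only for a single index $i$ and outsourcing the combinatorics to the already-known reduction property of $K$.
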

\begin{proof}
	Let $n=\max\{e_{1},\dots,e_{s}\}$.  For any $l$ such that $\a_{d+l}\in T$ and any $i=1,\ldots,d$, Lemma~\ref{majic} implies that
	\[
	\x^{n\a_i}=\x^{(n-e_{s})\a_i}\x^{e_{s}\a_i} \in \x^{(n-e_{s})\a_i}J\fm^{e_s-1}\subseteq J\fm^{n-1}.
	\]
	Fix $j$ such that $\a_{d+j} \notin T$. By Lemma~\ref{1-1}, it follows $\x^{n\a_{d+j}}\in (\x^{\a_1},\dots,\x^{\a_d})\fm^{n-1}$ and, then, $\x^{n\a_{d+j}}=\x^{\a_i}\x^\u$, for some $1\leq i\leq d$ and $\x^\u\in\fm^{n-1}$. 
	Therefore,
	\[
	\x^{n^2\a_{d+j}}=\x^{n\a_i}\x^{n\u}\in J\fm^{n-1} \fm^{n^2-n}\subseteq J\fm^{n^2-1}. 
	\] 
	Consequently, it is straightforward to see that every monomial in $\fm^p$ is in $J \fm^{p-1}$, for $p$ large enough.	
\end{proof}

\begin{exmp}
	{\bf 1)} Let $S$ be the semigroup of Example~\ref{exm-mon-red}. We have already showed that $(\x^{\a_1},\x^{\a_2})$ is a minimal reduction ideal of $\fm=(\x^{\a_1},\x^{\a_2},\x^{\a_3})$, but, since $\deg(\a_3)=1$, the previous proposition implies that also $(\x^{\a_1}-\x^{\a_2}, \x^{\a_3})$ is a  minimal reduction ideal of $\fm$.  \\
	{\bf 2)} Let $S$ be the semigroup generated by $\a_1=(2,0)$, $\a_2=(0,4)$, $\a_3=(1,1)$, $\a_4=(1,2)$, and $\a_5=(2,1)$. Then, $\a_1$ and $\a_2$ are the extremal rays of $S$, while $\deg(\a_3)=3/4$, $\deg(\a_4)=1$, and $\deg(\a_5)=5/4$. Therefore, $\fm=(\x^{\a_1},\x^{\a_2},\x^{\a_3},\x^{\a_4},\x^{\a_5})$ has not monomial minimal reductions, but $(\x^{\a_1}-\x^{\a_2},\x^{\a_3})$ is a minimal reduction by the previous proposition.  Note that also $(\x^{\a_1},\x^{\a_2},\x^{\a_3})$ is a reduction of $\fm$, but it is not minimal because $(\x^{\a_1}-\x^{\a_2},\x^{\a_3})\subset (\x^{\a_1},\x^{\a_2},\x^{\a_3})$. 
\end{exmp}

\section{Monomial reduction and multiplicity in dimension two}
 The multiplicity of a projective toric variety can be identified by the Euclidean volume of its convex hull, see for instance \cite[Theorem~4.16]{Sturmfels}.   
In this section,  for affine semigroups $S\subset\NN^2$, we use a similar idea computing  the volume of $P_S$,  to provide an explicit formula for the multiplicity of $R$ in terms of some Ap\'ery sets and an upper bound in terms of the coordinates of the extremal rays. Firstly, the criteria for having monomial minimal reduction ideal are made more concrete for affine semigroups of dimension two. 
Throughout the section, $S$ denotes  an affine semigroup contained in $\NN^2$ minimally generated by $\a_i=(c_{i},d_{i})$ for $i=1,\dots,s+2$, where $\a_1$ and $\a_2$ are the extremal rays.  We recall that every 2-dimensional affine semigroup is simplicial.

 The following result gives another criterion to see whether $\fm$ has a monomial reduction.

\begin{prop} \label{mon red dim 2}
	The following statements are equivalent, where $\lambda=\frac{d_1-d_2}{c_1-c_2}$. 
	\begin{enumerate}
		\item[(a)] $(\x^{\a_1},\x^{\a_2})$ is a  reduction ideal of $\fm$.
		\item[(b)] $d_i\geq\lambda(c_i-c_1)+d_1$, for $i=3,\dots,s+2$.
		\item[(c)] $d_i\geq\lambda(c_i-c_2)+d_2$, for $i=3,\dots,s+2$.
	\end{enumerate}
\end{prop}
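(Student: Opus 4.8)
The plan is to reduce everything to Theorem~\ref{monomial red}, which tells us that $(\x^{\a_1},\x^{\a_2})$ is a reduction of $\fm$ if and only if $\deg(\a_i)\geq 1$ for every $i=3,\dots,s+2$; so it suffices to translate the inequality $\deg(\a_i)\geq 1$ into the coordinate conditions (b) and (c). First I would write out explicitly what $\deg(\a_i)$ is: since $\a_1=(c_1,d_1)$ and $\a_2=(c_2,d_2)$ are $\QQ$-linearly independent extremal rays, each $\a_i=(c_i,d_i)$ can be written uniquely as $\a_i=l_1\a_1+l_2\a_2$ with $l_1,l_2\in\QQ_{\geq 0}$, and $\deg(\a_i)=l_1+l_2$. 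Solving the $2\times 2$ linear system $c_i=l_1c_1+l_2c_2$, $d_i=l_1d_1+l_2d_2$ by Cramer's rule gives $l_1=\frac{c_id_2-c_2d_i}{c_1d_2-c_2d_1}$ and $l_2=\frac{c_1d_i-c_id_1}{c_1d_2-c_2d_1}$, whence $\deg(\a_i)=\frac{(c_i-c_2)d_1\cdot(-1)+\dots}{c_1d_2-c_2d_1}$; I would simplify the numerator of $l_1+l_2$ to $c_i(d_2-d_1)+d_i(c_1-c_2)$ and conclude that $\deg(\a_i)\geq 1$ is equivalent to
\[
c_i(d_2-d_1)+d_i(c_1-c_2)\geq c_1d_2-c_2d_1,
\]
\emph{provided} the denominator $\Delta:=c_1d_2-c_2d_1$ is positive; I should first check the sign of $\Delta$ (it equals the determinant $\det[\a_1\,\a_2]$, which we may take positive after ordering the two extremal rays appropriately, or handle the sign bookkeeping honestly).

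The next step is the algebraic manipulation turning this into (b). Rearranging, $d_i(c_1-c_2)\geq c_1d_2-c_2d_1-c_i(d_2-d_1)=c_1d_2-c_2d_1+c_i(d_1-d_2)$. Now I would divide through by $c_1-c_2$ — here I must pin down the sign of $c_1-c_2$, which is why the statement introduces $\lambda=\frac{d_1-d_2}{c_1-c_2}$ rather than its reciprocal: geometrically $\lambda$ is the slope of the line $\DD$ through $\a_1$ and $\a_2$ (cf.\ Remark~\ref{D}), and the condition $\deg(\a_i)\geq 1$ is exactly ``$\a_i$ lies on or above $\DD$'', i.e.\ $d_i\geq\lambda(c_i-c_1)+d_1$, which is (b). So the cleanest route is: show directly that the line through $\a_1$ and $\a_2$ is $\{(x,y):y=\lambda(x-c_1)+d_1\}$, that $\deg$ is the affine-linear function on $\co(S)$ vanishing at $\0$ and equal to $1$ on that line, and hence that $\deg(\a_i)\geq 1\iff d_i\geq\lambda(c_i-c_1)+d_1$ — after checking that $\deg$ increases as one moves away from $\DD$ on the side containing $\co(S)\setminus\PP$. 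Then (b)$\iff$(c) is immediate since $\lambda(c_i-c_1)+d_1=\lambda(c_i-c_2)+d_2$, because $\lambda(c_1-c_2)=d_1-d_2$ by definition of $\lambda$; both right-hand sides are just two point-slope forms of the same line $\DD$.

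The main obstacle I anticipate is not conceptual but bookkeeping: handling the sign of $c_1-c_2$ (and of $\Delta$) correctly, and the degenerate case $c_1=c_2$ where $\lambda$ is undefined. If $c_1=c_2$ then $\a_1,\a_2$ being linearly independent forces $d_1\neq d_2$, the line $\DD$ is vertical, $\deg(\a_i)=c_i/c_1$, and the condition becomes $c_i\geq c_1$; one should check this is the natural limiting interpretation of (b)/(c) (or simply note the statement implicitly assumes $c_1\neq c_2$, reading $\lambda$ as a finite slope). I would also want to double-check orientation: after possibly swapping $\a_1\leftrightarrow\a_2$ we may assume $c_1>c_2$ (or handle $c_1<c_2$ symmetrically, which flips the inequality direction when dividing by $c_1-c_2$ but simultaneously flips which side of $\DD$ is ``above'', so the final statement (b) is sign-robust). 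Once the sign conventions are fixed, the equivalences fall out of the single identity $\Delta\cdot\deg(\a_i)=c_i(d_2-d_1)+d_i(c_1-c_2)$ together with $\lambda(c_1-c_2)=d_1-d_2$, and I would present the argument in that order: compute $\deg(\a_i)$ via Cramer, rewrite as an inequality about $\lambda$, and observe the two point-slope forms coincide.
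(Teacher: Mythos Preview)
Your proposal is correct and follows essentially the same approach as the paper: reduce via Theorem~\ref{monomial red} (through Remark~\ref{D}) to the geometric condition that each $\a_i$ lies on or above the line $\DD$ through $\a_1,\a_2$, then observe that (b) and (c) are the two point-slope forms of that line. The paper's proof is considerably terser---it skips the explicit Cramer computation and the sign/degenerate-case discussion you raise, simply writing down the equation of $\DD$---but your additional care about the sign of $c_1-c_2$ and the undefined case $c_1=c_2$ is warranted, since the paper silently assumes $\lambda$ is a finite slope.
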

\begin{proof}
	Denote by $\DD$ the line passing through the extremal rays $\a_1,\a_2$. By Remark~\ref{D}, the statement (a) is equivalent to have  all $\a_i$ located on or above the line $\DD$ for $i=3,\dots,s+2$. The result follows by the fact that $\DD$ has the equation 
	\begin{equation*}
	\begin{split}
	y &= \frac{d_1-d_2}{c_1-c_2}(x-c_1)+d_1\\
	&= \frac{d_1-d_2}{c_1-c_2}(x-c_2)+d_2. \qedhere
	\end{split}
	\end{equation*}
\end{proof}

\begin{prop}\label{lem}
	If 	$(\x^{\a_1},\x^{\a_2})$ is a  reduction ideal of $\fm$, then $c_i\geq\min\{c_1,c_2\}$ and 
	$d_i\geq\min\{d_1,d_2\}$, for $i=3,\dots,s+2$. 
\end{prop}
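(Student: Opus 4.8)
The plan is to exploit the geometric reformulation from Remark~\ref{D} and Proposition~\ref{mon red dim 2}: the hypothesis that $(\x^{\a_1},\x^{\a_2})$ is a reduction ideal of $\fm$ means exactly that every generator $\a_i=(c_i,d_i)$ with $i\geq 3$ lies on or above the line $\DD$ through $\a_1$ and $\a_2$. Since $\a_1,\a_2$ are the extremal rays of the cone $\co(S)$ and $\a_i\in\co(S)$, the point $\a_i$ also lies inside the cone spanned by $\a_1$ and $\a_2$. The claim $c_i\geq\min\{c_1,c_2\}$ and $d_i\geq\min\{d_1,d_2\}$ should then follow from combining these two constraints: being above $\DD$ rules out the region near the origin on one side, while being inside the cone rules out the region on the other side.

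First I would dispose of degenerate cases: if $c_1=c_2$, then $\lambda$ is undefined, but in that case the line $\DD$ is vertical, $x=c_1$, and being on the correct side of it (the side containing $\co(S)\setminus\DD$, which forces $c_i\geq c_1=c_2=\min\{c_1,c_2\}$) handles the first inequality directly; the second then follows from $\a_i$ lying between the rays $\a_1,\a_2$. Similarly if $d_1=d_2$. So assume $c_1\neq c_2$ and $d_1\neq d_2$, and relabel so that $c_1<c_2$; then since $\a_1,\a_2$ are on distinct extremal rays, comparing slopes of the two rays forces $d_1>d_2$ (otherwise one ray would dominate the other componentwise and they could not both be extremal). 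Thus $\min\{c_1,c_2\}=c_1$ and $\min\{d_1,d_2\}=d_2$, and $\lambda=\frac{d_1-d_2}{c_1-c_2}<0$.

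Now for $i\geq 3$ write $\a_i=\mu_1\a_1+\mu_2\a_2$ with $\mu_1,\mu_2\geq 0$ (possible since $S\subseteq\QQ_{\geq0}\a_1+\QQ_{\geq0}\a_2$). To prove $c_i\geq c_1$: suppose $c_i<c_1<c_2$; since $c_i=\mu_1c_1+\mu_2c_2$ and both $c_1,c_2$ are $\geq$ the smaller one, $c_i<c_1$ forces $\mu_1+\mu_2<1$, i.e. $\a_i$ lies strictly inside the triangle $\PP=\co(\0,\a_1,\a_2)$; by Proposition~\ref{mon red dim 2}(b) and the description in Remark~\ref{D}, $\a_i$ must lie on or above $\DD$, hence on $\DD$, hence $\mu_1+\mu_2=1$, a contradiction. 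So $c_i\geq c_1=\min\{c_1,c_2\}$. The inequality $d_i\geq d_2$ is entirely symmetric: either argue with the roles of the two coordinates swapped (using that $d_1>d_2$ plays the role that $c_1<c_2$ played, and using part (c) of Proposition~\ref{mon red dim 2} in place of part (b)), or note again that $d_i=\mu_1d_1+\mu_2d_2<d_2$ would force $\mu_1+\mu_2<1$ and the same contradiction with the "on or above $\DD$" condition.

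The only real subtlety is bookkeeping about signs of slopes and which of $c_1,c_2$ (resp. $d_1,d_2$) is the minimum — i.e. verifying that for genuinely distinct extremal rays in $\NN^2$ the two coordinates are oppositely ordered, so that "above $\DD$" and "inside the cone" together pin the point into the quadrant $\{x\geq\min c,\ y\geq\min d\}$. Once that orientation is fixed, each inequality reduces to the one-line observation that dropping below $\min\{c_1,c_2\}$ (resp. $\min\{d_1,d_2\}$) forces the barycentric coordinate sum below $1$, contradicting Proposition~\ref{mon red dim 2}. I expect this orientation check to be the main (though minor) obstacle; everything else is immediate from the results already established.
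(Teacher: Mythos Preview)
Your core idea is correct and coincides with the paper's: write $\a_i=\mu_1\a_1+\mu_2\a_2$ with $\mu_1,\mu_2\in\QQ_{\geq 0}$, use the hypothesis to get $\mu_1+\mu_2\geq 1$, and observe
\[
c_i=\mu_1c_1+\mu_2c_2\geq(\mu_1+\mu_2)\min\{c_1,c_2\}\geq\min\{c_1,c_2\},
\]
and similarly for $d_i$. The paper does exactly this in two lines, invoking Theorem~\ref{monomial red} directly for the inequality $\mu_1+\mu_2\geq 1$ rather than passing through the geometric restatements in Remark~\ref{D} and Proposition~\ref{mon red dim 2}; no case analysis, no orientation check, no contradiction.

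Your orientation claim, however, is false and should be dropped. You assert that $c_1<c_2$ forces $d_1>d_2$ because ``otherwise one ray would dominate the other componentwise and they could not both be extremal.'' Componentwise domination of the vectors has nothing to do with extremality of the rays: take $\a_1=(1,3)$ and $\a_2=(2,5)$, which span a perfectly good two-dimensional cone (rays of slopes $3$ and $5/2$) and are the extremal rays of the semigroup they generate, yet $c_1<c_2$ and $d_1<d_2$. Fortunately this claim is also unnecessary: your own inequality ``$c_i<\min\{c_1,c_2\}$ forces $\mu_1+\mu_2<1$'' is valid regardless of which of $c_1,c_2$ is smaller, and the same holds for the second coordinate. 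Once you delete the orientation discussion and the degenerate cases (which are subsumed by the same one-line inequality), what remains is precisely the paper's argument.
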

\begin{proof}
	Let $\a_i=l_{i_1}\a_1+l_{i_2}\a_2$, where $l_{i_1},l_{i_2}\in\QQ$, for $i=3,\dots,s+2$. By 
	Theorem~\ref{monomial red}, we have $l_{i_1}+l_{i_2}\geq 1$, for $i=3,\dots,s+2$. 
	Thus, the result follows from the inequalities 
	$$\min\{c_1,c_2\}(l_{i_1}+l_{i_2})\leq l_{i_1}c_1+l_{i_2}c_2=c_i$$ and 
	\[ 
	\min\{d_1,d_2\}(l_{i_1}+l_{i_2})\leq l_{i_1}d_1+l_{i_2}d_2=d_i. \qedhere
	\] 	
\end{proof}

As the following example shows, the converse of Proposition~\ref{lem} is not true. 

\begin{exmp}
	Let $S$ be generated by $\a_1=(4,1)$, $\a_2=(1,3)$ and $\a_3=(2,2)$. Then, $c_3=d_3>\min\{c_1,c_2\}=\min\{d_1,d_2\}=1$. On the other hand, $\a_3=\frac{4}{11}\a_1+\frac{6}{11}\a_2$ and $\deg(\a_3)=\frac{10}{11}<1$, thus, $(\x^{\a_1},\x^{\a_2})$ is not a reduction of $\fm$ by Theorem~\ref{monomial red}.
\end{exmp}

 In order to give a concrete upper bound for the multiplicity of $R$  in the Cohen-Macaulay case, we prove a more general result.

\begin{prop} \label{parallelogram}
	The ring $R$ is Cohen-Macaulay if and only if 
	\begin{equation*}
	|\AP(S,E)|+|P_S\setminus\G(S)|=|c_1d_2-c_2d_1|.
	\end{equation*}
\end{prop}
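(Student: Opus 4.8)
The plan is to read off the equivalence directly from Lemma~\ref{lemAp} after translating it into a purely numerical statement, the only substantive input being a lattice-point count for $P_S$. Recall from Lemma~\ref{eqm} and \eqref{lambda} that $\AP(S,E)$ is finite, with $|\AP(S,E)| = \lambda_R(R/(\x^{\a_1},\x^{\a_2}))$. By Lemma~\ref{lemAp}(a) the map $\re(-)$ sends $\AP(S,E)$ onto $P_S\cap\G(S)$, and by Lemma~\ref{lemAp}(b) the ring $R$ is Cohen-Macaulay precisely when this surjection is injective. Since a surjection between finite sets is injective if and only if the two sets are equinumerous, the first step is simply to record that $R$ is Cohen-Macaulay $\iff |\AP(S,E)| = |P_S\cap\G(S)|$.

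The heart of the argument — and the one step worth a careful treatment — is the identity $|P_S| = |c_1 d_2 - c_2 d_1|$. I would set $\Lambda = \ZZ\a_1 + \ZZ\a_2 \subseteq \ZZ^2$; since $\a_1,\a_2$ are $\QQ$-linearly independent, $c_1 d_2 - c_2 d_1 \neq 0$ and $\Lambda$ is a finite-index subgroup whose index is $|c_1 d_2 - c_2 d_1|$ — the absolute value of the determinant of the matrix with rows $\a_1, \a_2$, or, in the ``volume'' language of the introduction, the area of the parallelogram spanned by $\a_1$ and $\a_2$. The half-open parallelogram $\Pi = \{\mu_1\a_1 + \mu_2\a_2 : 0 \le \mu_i < 1\}$ is a fundamental domain for the translation action of $\Lambda$ on $\RR^2$; restricting the unique decomposition $v = \pi + \ell$ (with $\pi \in \Pi$, $\ell \in \Lambda$) to $v \in \ZZ^2$ shows $\pi = v - \ell \in \ZZ^2 \cap \Pi = P_S$ (non-negativity of coordinates in $\Pi$ being automatic), so reduction modulo $\Lambda$ gives a bijection $\ZZ^2/\Lambda \to P_S$ and hence $|P_S| = [\ZZ^2 : \Lambda] = |c_1 d_2 - c_2 d_1|$.

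Finally I would assemble the two pieces: partitioning $P_S = (P_S\cap\G(S)) \sqcup (P_S\setminus\G(S))$ gives $|P_S\cap\G(S)| = |c_1 d_2 - c_2 d_1| - |P_S\setminus\G(S)|$, and substituting into the equivalence from the first paragraph converts ``$R$ is Cohen-Macaulay'' into ``$|\AP(S,E)| + |P_S\setminus\G(S)| = |c_1 d_2 - c_2 d_1|$'', which is the claim.

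The only genuine obstacle is the lattice-point count $|P_S| = |c_1 d_2 - c_2 d_1|$; everything else is bookkeeping layered on Lemma~\ref{lemAp}. That count is classical, so the real choice is presentational: whether to invoke Smith normal form for the integer matrix with rows $\a_1,\a_2$, or to argue directly that $\Pi$ is a fundamental domain and that the index of a full-rank sublattice of $\ZZ^2$ equals its covolume $|\det|$ — the latter matching the volume-of-$P_S$ viewpoint flagged in the introduction.
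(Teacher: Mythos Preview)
Your proof is correct and structurally identical to the paper's: both reduce everything to Lemma~\ref{lemAp} together with the count $|P_S| = |c_1 d_2 - c_2 d_1|$, then partition $P_S$ into $P_S\cap\G(S)$ and its complement. The only real difference is in how that count is established. The paper computes it via Pick's theorem: it takes the area $|c_1 d_2 - c_2 d_1|$ of the closed parallelogram, counts the $2(e_1+e_2)$ boundary lattice points (with $e_i$ essentially $\gcd(c_i,d_i)$), applies Pick to get the number of interior points, and then adjusts for exactly which boundary points lie in the half-open $P_S$. Your fundamental-domain argument --- that $P_S$ is a complete set of coset representatives for $\Lambda = \ZZ\a_1+\ZZ\a_2$ in $\ZZ^2$, so $|P_S| = [\ZZ^2:\Lambda] = |\det|$ --- reaches the same conclusion without the boundary bookkeeping, and your parenthetical that non-negativity in $\Pi$ is automatic (since $\a_1,\a_2\in\NN^2$) correctly disposes of the one subtlety in identifying $\Pi\cap\ZZ^2$ with $P_S$. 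Your route is cleaner and more conceptual; the paper's has the minor advantage of making the geometry of the parallelogram boundary explicit, which resonates with the ``volume'' remark opening the section, but is otherwise just a longer path to the same identity.
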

\begin{proof}
	Without loss of generality, we may assume that $c_1d_2> c_2d_1$. The area of the parallelogram spanned by  $\a_1$ and $\a_2$ equals 
	$\det \begin{pmatrix}c_1 & c_2 \\ d_1 & d_2\end{pmatrix}=c_1d_2-c_2d_1$.
	If we define
	\[ e_i=
	\begin{cases}
	d_i \hspace{35pt} &\text{if } c_i= 0 \\
	c_i &\text{if } d_i = 0 \\
	\gcd(c_i,d_i) &\text{otherwise}
	\end{cases}
	\]
	for $i=1,2$, the boundary of that parallelogram contains precisely $2(e_1+e_2)$ lattice points. Then, Pick's theorem (\cite[Theorem 2.8]{BeRo}) implies that $P_{S}$ has $c_1d_2-c_2d_1-(e_1+e_2)+1$ inner lattice points. 
	Given $\a\in P_S$, then $\a$ never equals any of the points 
	$$\{\a_1+\a_2, \a_1+\frac{b_2}{e_2}\a_2, \frac{b_1}{e_1}\a_1+\a_2 \text{, where } 0 \leq b_i < e_i \}$$
	on the boundary of the mentioned parallelogram. Consequently, 
	the points of $P_S$ belong to the union of inner  lattice points and the boundary ones except  $e_1+e_2+1$ points. 
	Therefore, on the one hand Lemma~\ref{lemAp}(a) implies that $|P_S|=|P_S \cap {\rm Gr}(S)|+|P_S \setminus {\rm Gr}(S)|=|r(\AP(S,E))|+|P_S \setminus {\rm Gr}(S)|$, but on the other hand we have
	\begin{eqnarray*}
		|P_S| &=& (c_1d_2-c_2d_1-(e_1+e_2)+1)+2(e_1+e_2)-(e_1+e_2+1)\\
		&=&c_1d_2-c_2d_1.  
	\end{eqnarray*} 
	It is now enough to recall that $R$ is Cohen-Macaulay if and only if $|r(\AP(S,E))|=|\AP(S,E)|$, by Lemma~\ref{lemAp}(b).
\end{proof}

Lemma~\ref{eqm}(c) immediately implies the following:

\begin{cor}\label{e-ub}
	Assume that $R$ is Cohen-Macaulay. The following statements hold:
	\begin{enumerate}
		\item[(a)]	$e(R)\leq |c_1d_2-c_2d_1|$.
		\item[(b)]  $\G(S)=\ZZ^2$ if and only if	
		$|\AP(S,E)|=|c_1d_2-c_2d_1|$.
	\end{enumerate}
\end{cor}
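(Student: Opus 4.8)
The plan is to derive Corollary~\ref{e-ub} directly from Proposition~\ref{parallelogram} together with Lemma~\ref{eqm}(c). The key observation is that Proposition~\ref{parallelogram} gives, in the Cohen-Macaulay case, the exact identity $|\AP(S,E)| + |P_S \setminus \G(S)| = |c_1 d_2 - c_2 d_1|$, and both summands on the left are nonnegative integers.

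For part (a), I would argue as follows. Assume $R$ is Cohen-Macaulay. By Lemma~\ref{eqm}(c) we have $e(R) \leq |\AP(S,E)|$. By Proposition~\ref{parallelogram}, $|\AP(S,E)| = |c_1 d_2 - c_2 d_1| - |P_S \setminus \G(S)| \leq |c_1 d_2 - c_2 d_1|$, since $|P_S \setminus \G(S)| \geq 0$. Combining these two inequalities gives $e(R) \leq |c_1 d_2 - c_2 d_1|$, which is exactly statement (a).

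For part (b), the chain of reasoning is: by Proposition~\ref{parallelogram}, the equality $|\AP(S,E)| = |c_1 d_2 - c_2 d_1|$ holds if and only if $|P_S \setminus \G(S)| = 0$, i.e.\ if and only if $P_S \subseteq \G(S)$. So the task reduces to showing that $P_S \subseteq \G(S)$ is equivalent to $\G(S) = \ZZ^2$. One direction is immediate: if $\G(S) = \ZZ^2$ then certainly $P_S \subseteq \ZZ^2 = \G(S)$. For the converse, I would use the fact that $\G(S)$ is a subgroup of $\ZZ^2$ containing $\a_1$ and $\a_2$, and that every element of $\ZZ^2$ lying in $\co(S)$ can be written (modulo the subgroup generated by $\a_1, \a_2$) as an element of $P_S$ via the decomposition $\a = \sum n_i \a_i + \re(\a)$ with $\re(\a) \in P_S \cap \NN^d$; since $P_S \subseteq \G(S)$ forces $\re(\a) \in \G(S)$, and $\a_1,\a_2 \in \G(S)$, we get $\a \in \G(S)$, hence $\G(S)$ contains $\ZZ^2 \cap \co(S)$, and since $\G(S)$ is a group it must then contain the subgroup generated by $\ZZ^2 \cap \co(S)$, which is all of $\ZZ^2$ (as $\co(S)$ is full-dimensional in dimension two).

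The main obstacle — though it is a mild one — is the converse direction in part (b): one must be careful that $\G(S) = \ZZ^2$ really follows from $P_S \subseteq \G(S)$ and not merely that $\G(S)$ contains the cone points. Here the key point is that $\co(S)$ spans $\RR^2$, so $\ZZ^2 \cap \co(S)$ contains two $\QQ$-linearly independent vectors and in fact generates $\ZZ^2$ as a group (any lattice point is a $\ZZ$-combination of lattice points in the cone, after translating by a suitable multiple of, say, $\a_1 + \a_2$ to land inside the cone). Apart from this, the corollary is a routine consequence of the two quoted results, so the proof will be very short, essentially the two displayed inequalities for (a) and the short equivalence argument for (b).
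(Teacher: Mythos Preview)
Your proposal is correct and follows exactly the route the paper intends: the corollary is stated as an immediate consequence of Proposition~\ref{parallelogram} together with Lemma~\ref{eqm}(c), and your argument spells this out. Your treatment of part~(b) supplies the details the paper leaves implicit; a slightly quicker way to finish the converse is to note that the lattice points of $P_S$ form a full set of coset representatives for $\ZZ^2/(\ZZ\a_1+\ZZ\a_2)$, but your cone argument is equally valid.
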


\begin{exmp}
{\bf 1)} Let $S$ be generated by $\a_1=(3,1), \a_2=(4,2)$, and $\a_3=(5,2)$. Since in this case $R$ is Cohen-Macaulay, Corollary~\ref{e-ub} implies that $e(R) \leq 6-4=2$. Moreover, the ring $R$ is not regular because its embedding dimension is equal to three and, consequently, $e(R)=2$. \\ 
{\bf 2)} Let $S$ be the semigroup generated by  $\a_1=(5,3)$, $\a_2=(3,5)$, and $\a_3=(2,2)$. Then, $\AP(S,E)=\{(0,0),(2,2),(4,4),(6,6)\}$ and $P_S\setminus\G(S) = \{(1,1), (3,3), (5,5), \break (7,7), (2,3), (3,2), (3,4), (4,3), (5,4), (4,5), (6,5), (5,6)\}$. Thus,
\[
	|\AP(S,E)|+|P_S\setminus\G(S)|=4+12=25-9=16.
\]
Moreover, a computation by Macaulay2 \cite{Mac2} shows that $e(R)=2$, where $R=\mathbb{Q}[\x^{\a_1},\x^{\a_2},\x^{\a_3}]$. 
\end{exmp}

In the sequel we are looking for the multiplicity of $R$ in terms of $\AP(S,E)$, when $R$ has a monomial minimal reduction ideal but it is not necessarily Cohen-Macaulay. For an integer $k$, we set
	\[
	A_k=\{\w\in\AP(S,E) \, ; \, \w+k\a_2\notin \AP(S,\a_1)\}.
	\]
\begin{prop}\label{3.8}
Let  $I=(\x^{\a_1},\x^{\a_2})$ be  a reduction ideal of $\fm$ and $r=r_I(R)$. Then, $A_k=A_r$ for $k\geq r$ and $e(R)=|\AP(S,E)|-|A_r|$. In particular, $R$ is Cohen-Macaulay precisely when $A_r$ is an empty set.
\end{prop}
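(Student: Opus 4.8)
\emph{Plan.} Since $I$ is a reduction of $\fm$ we have $e(R)=e(\fm)=e(I)$ by Lemma~\ref{eqm}(a), so the whole statement reduces to computing $e(I)$. The chain $A_0\subseteq A_1\subseteq\cdots$ is increasing (if $\w+k\a_2-\a_1\in S$ then $\w+(k+1)\a_2-\a_1=(\w+k\a_2-\a_1)+\a_2\in S$), so ``$A_k=A_r$ for $k\ge r$'' is the same as ``$\bigcup_{k\ge0}A_k=A_r$''. Granting the latter, the formula $e(R)=|\AP(S,E)|-|A_r|$ will follow from the identity $e(I)=|\AP(S,E)|-|\bigcup_k A_k|$ proved below; and the ``in particular'' is then immediate, because $R$ is Cohen-Macaulay iff $e(R)=|\AP(S,E)|$ (Proposition~\ref{prop:mon}, using that $I$ is a reduction), which by the formula means $A_r=\emptyset$.

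\emph{Step 1 (computing $e(I)$ by the Koszul complex).} As $\a_1,\a_2$ are $\QQ$-linearly independent and $S$ is simplicial, $\x^{\a_1},\x^{\a_2}$ is a homogeneous system of parameters with $(\x^{\a_1},\x^{\a_2})=I$. By the theorem expressing the multiplicity of a parameter ideal as the Euler characteristic of the corresponding Koszul complex (see, e.g., \cite{Matsumura}), $e(I)=\sum_{i\ge0}(-1)^i\lambda_R\!\big(H_i(\x^{\a_1},\x^{\a_2};R)\big)$. Now $H_0=R/I$, so $\lambda_R(H_0)=|\AP(S,E)|$ (computation in Lemma~\ref{eqm}); $H_2=\{a\in R:\x^{\a_1}a=\x^{\a_2}a=0\}=0$ since $R$ is a domain; and, $\x^{\a_1}$ being a nonzerodivisor, the standard identification gives $H_1\cong(\x^{\a_1}R:_R\x^{\a_2})/\x^{\a_1}R$. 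The latter is a monomial module, spanned over $\KK$ by the $\x^{\v}$ with $\v\in S$, $\v-\a_1\notin S$ and $\v+\a_2-\a_1\in S$; writing $\g=\v-\a_1$ these $\v$ correspond bijectively to $D:=\{\g\in\G(S)\setminus S:\ \g+\a_1\in S\ \text{and}\ \g+\a_2\in S\}$. Hence
\[
e(R)=e(I)=|\AP(S,E)|-|D|.
\]

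\emph{Step 2 (matching $D$ with $\bigcup_k A_k$).} I set up mutually inverse maps. Given $\g\in D$, let $m(\g)=\max\{m\ge0:\g+\a_1-m\a_2\in S\}$ (finite: $\g+\a_1\in S$ while $\g+\a_1-m\a_2$ leaves $\co(S)$ for large $m$), and put $\w=\g+\a_1-m(\g)\a_2$. Then $\w-\a_2\notin S$ by maximality, and $\w-\a_1=\g-m(\g)\a_2\notin S$ (otherwise $\g\in S$), so $\w\in\AP(S,E)$; moreover $\w+(m(\g)+1)\a_2-\a_1=\g+\a_2\in S$, so $\w\in A_{m(\g)+1}$. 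Conversely, for $\w\in\bigcup_k A_k$ set $k_\w=\min\{k:\w+k\a_2-\a_1\in S\}$ and $\g=\w+(k_\w-1)\a_2-\a_1$; then $\g\notin S$ (minimality), $\g+\a_1=\w+(k_\w-1)\a_2\in S$, $\g+\a_2=\w+k_\w\a_2-\a_1\in S$, so $\g\in D$. One checks these are inverse (in particular $m(\g)=k_\w-1$), so $|D|=|\bigcup_k A_k|$ and therefore $e(R)=|\AP(S,E)|-|\bigcup_{k\ge0}A_k|$.

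\emph{Step 3 (stabilization at $r$: the main obstacle).} It remains to prove $\bigcup_k A_k=A_r$, equivalently $k_\w\le r$ for every $\w\in\AP(S,E)$ with $k_\w<\infty$; this is where the reduction number enters and is the technical heart. The approach is: given $\w\in A_{k+1}$ with $k\ge r$, show $\w+k\a_2-\a_1\in S$ by applying the reduction identity $\fm^{\,n+1}=I\fm^{\,n}$ ($n\ge r$) to monomials $\x^{\z}$ with $\z\in S$ of large order — since $\x^{\z}\in I\fm^{\ord_S(\z)-1}$ and $\x^{\z}$ is a monomial, $\z$ must be divisible by $\x^{\a_1}$ or by $\x^{\a_2}$ — and arranging, by a descent on the $\a_1$-coordinate of $\w+(k+1)\a_2-\a_1$, that one copy of $\a_2$ is removed before the $\a_1$-coordinate would drop below zero. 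The delicate point is to keep the orders occurring in the descent $\ge r+1$ so that the reduction identity remains applicable until the descent must terminate inside $\co(S)$; the Apéry elements $\w$ with $\ord_S(\w)=0$ (namely $\0$ and the minimal generators that lie in $\AP(S,E)$) are the extremal case and have to be treated separately. With $\bigcup_k A_k=A_r$ in hand, Step~2 yields $e(R)=|\AP(S,E)|-|A_r|$ and $A_k=A_r$ for $k\ge r$, and the Cohen-Macaulay characterization follows as in the first paragraph.
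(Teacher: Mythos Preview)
Your Steps~1 and~2 are correct and constitute a genuinely different (and rather clean) route to the identity $e(R)=|\AP(S,E)|-|\bigcup_{k\ge0}A_k|$: the Koszul computation of $e(I)$ together with the bijection $D\leftrightarrow\bigcup_kA_k$ is a nice alternative to what the paper does. (One small slip: minimal generators have $\ord_S=1$, not $0$; only $\0$ has order $0$, and you correctly observe $\0\notin\bigcup_kA_k$.)

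The gap is Step~3. Your descent sketch does not close. Concretely, take $\w\in\AP(S,E)$ with $k_\w>r$ and set $\z_j=\w+j\a_2$. All $\z_0,\dots,\z_{k_\w-1}$ lie in $\AP(S,\a_1)$, and using $\fm^{n+1}=I\fm^{n}$ for $n\ge r$ one does obtain that whenever $\ord_S(\z_j)\ge r+1$ and $\z_j\in\AP(S,\a_1)$ then $\ord_S(\z_{j-1})=\ord_S(\z_j)-1$. But this only forces the orders to decrease by one until they hit $r$; it never produces an $\a_1$ in the factorization (the Ap\'ery condition forces $i=2$ at every step), so you never conclude $\z_j-\a_1\in S$ for any $j<k_\w$. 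In other words, the ``descent on the $\a_1$-coordinate'' you propose simply does not occur: the reduction identity always peels off an $\a_2$, not an $\a_1$, and the argument stalls without yielding $\w\in A_r$.

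The paper sidesteps this by a different mechanism. It passes to the one\nobreakdash-dimensional quotient $R'=R/(\x^{\a_1})$ with maximal ideal $\fn$, uses $e(R)=e(R')$ and that $(\x^{\a_2})R'$ is a reduction of $\fn$ with reduction number $\le r$, and identifies $\lambda(\fn^{k}/\fn^{k+1})=|\{\v\in\AP(S,\a_1):\ord_S(\v)=k\}|$, which is constant and equal to $e(R)$ for all $k\ge r$. It then shows that for every $k\ge r$ this set of order-$k$ Ap\'ery elements is exactly $\{\w_j+(m_j+k-r)\a_2\}_j\cap\AP(S,\a_1)$ (with $m_j=\min\{m:\ord_S(\w_j+m\a_2)\ge r\}$); the constancy of the cardinality then forces $\w_j+m_j\a_2\in\AP(S,\a_1)\iff\w_j+k\a_2\in\AP(S,\a_1)$ for all $k\ge m_j$, which is precisely the stabilization $A_k=A_r$. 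So the stabilization in the paper is a consequence of the \emph{equality of Hilbert function values} in the quotient, not of a monomial descent; that is the missing idea in your Step~3.
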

\begin{proof} Let $\AP(S,E)=\{\w_1,\dots,\w_t\}$. Then, $\ord_S(\w_i)\leq r$ for $1 \leq i \leq t$ by Remark~\ref{rem:red}. Let
	$$m_j=\min\{m \, ; \, \ord_S(\w_j+m\a_2)\geq r\},$$ for $j=1,\dots,t.$
If $\ord_S(\w_j+m_j\a_2)=r'>r$, then $m_{j}>0$, $\x^{\w_j+m_j\a_2} \in \fm^{r'}=(\x^{\a_1},\x^{\a_2})\fm^{r'-1}$ and, thus, 
 $\w_j+m_j\a_2=\w+\a_i$ for some $\w\in S$ with $\ord_S(\w)=r'-1$ and $1\leq i\leq2$. Since $\ord_S(\w_j+(m_j-1)\a_2)<r$, we get $i=1$. Consequently $\w_j+m_j \a_2\notin\AP(S,\a_1)$. Note that any element in $\AP(S,\a_1)$ is uniquely written as $\w+m\a_2$, for some $\w\in\AP(S,E)$ and $m\in\NN$. Therefore,
\[
\{\w_j+m_j\a_2 \, ; \, j=1,\dots,t\}\cap\AP(S,\a_1)=\{\w\in\AP(S,\a_1) \, ;\, \ord_S(\w)=r \}.
\]
Let $R'=R/(\x^{\a_1})R$  and let $\mathfrak{n}$ be its maximal ideal. Then, $e(R)=e(R')$, by \cite[Proposition~11.1.9]{Huneke-Swanson}. 
Since $(\x^{\a_1},\x^{\a_2})$ is a minimal reduction ideal of $\fm$, we derive that $(\x^{\a_2})R'$ is a minimal reduction ideal of $\mathfrak{n}$ with reduction number smaller than or equal to $r$.  
As $\dim(R')=1$, its multiplicity is equal to $|\fn^k/\fn^{k-1}|$ for all $k \geq r$. In other words,  
\[e(R)=|\{\w\in\AP(S,\a_1) \, ; \, \ord_S(\w)=k\}|,\]
for all $k\geq r$.
It follows that 
\[
\{\w\in\AP(S,\a_1) \, ; \, \ord_S(\w)=k\}=\{\w_j+(m_j+k-r)\a_2 \, ; \, j=1,\dots,t\}\cap\AP(S,\a_1),	
\]
has $e(R)$ elements. Implicitly, this means that $\w_j+m_j\a_2\in\AP(S,\a_1)$ if and only if $\w_j+k\a_2\in\AP(S,\a_1)$ for $k\geq m_j$. Thus, $e(R)=|\AP(S,E)|-|A_k|=|\AP(S,E)|-|A_r|$. 
The last statement follows by Proposition~\ref{prop:mon}.	
\end{proof}

\section{The  associated graded ring }

In this section we explore the properties of the associated graded ring $\gr_\fm(R)=\oplus_{i=0}^\infty \, \fm^i/\fm^{i+1}$ of $R$ assuming that $\fm$ has a monomial minimal reduction ideal.
We first recall a result which will be useful in what follows. There, given $f \in R$, we denote its image in $\gr_\fm(R)$ by $f^*$,  whereas if $J$ is an ideal of $R$, we set $\ini(J)=\{f^{*} \, ; \, f \in J\}$.

\begin{prop}\cite[Proposition~3.8]{Verma}\label{Verma}
	Let $f_1,\dots,f_d\in\fm$. Then, $(f_1,\dots,f_d)$ is  a minimal reduction ideal of $\fm$ if and only if $(f_1^*,\dots,f_d^*)$ is a homogeneous system of parameters of $\gr_\fm(R)$.	
\end{prop}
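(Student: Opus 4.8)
The plan is to reduce the statement to a dimension count in the standard graded $\KK$-algebra $\gr_\fm(R)$, using that $\dim\gr_\fm(R)=\dim R=d$. I would prove that $J:=(f_1,\dots,f_d)$ is a minimal reduction of $\fm$ if and only if $\gr_\fm(R)/(f_1^*,\dots,f_d^*)$ has Krull dimension $0$; since, for a finitely generated graded module over a Noetherian standard graded $\KK$-algebra, Krull dimension $0$, finite length, and vanishing in all high degrees coincide, and since $d$ homogeneous elements of a $d$-dimensional such algebra form a homogeneous system of parameters exactly when the quotient by them has dimension $0$, this is precisely the assertion. The preliminary point to settle is that, under either hypothesis, every $f_i$ lies in $\fm\setminus\fm^2$, so that $f_i^*=f_i+\fm^2$ is a degree-one form: if some $f_i\in\fm^2$ and $J$ is a reduction, then for $n\gg0$ the identity $\fm^{n+1}=J\fm^n$ together with $f_i\fm^n\subseteq\fm^{n+2}=\fm\cdot\fm^{n+1}$ gives $\fm^{n+1}=\bigl(\sum_{k\neq i}f_k\fm^n\bigr)+\fm\cdot\fm^{n+1}$, whence Nakayama's lemma makes $(f_k:k\neq i)$ a reduction of $\fm$ generated by only $d-1$ elements --- impossible, as $\fm$ admits no reduction with fewer than $\dim R=d$ generators; and if $f_i\in\fm^2$ while $f_1^*,\dots,f_d^*$ is a homogeneous system of parameters, then $f_i^*=0$, which cannot be. With this normalization, the computational core is the identification of graded components, for every $n\geq1$,
\[
\bigl[\,\gr_\fm(R)\big/(f_1^*,\dots,f_d^*)\,\bigr]_n\;\cong\;\fm^n\big/\bigl(J\fm^{n-1}+\fm^{n+1}\bigr),
\]
which holds because each $f_i^*$ has degree one.

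For "minimal reduction $\Rightarrow$ homogeneous system of parameters", put $r=r_J(R)$; then $\fm^{n+1}=J\fm^n$ for $n\geq r$, so the right-hand side of the display vanishes for $n>r$, and therefore $\gr_\fm(R)/(f_1^*,\dots,f_d^*)$ vanishes in all high degrees, hence has dimension $0$. For the converse, if $f_1^*,\dots,f_d^*$ is a homogeneous system of parameters then $\gr_\fm(R)/(f_1^*,\dots,f_d^*)$ has dimension $0$, hence vanishes in all degrees $\geq n_0$ for some $n_0$; by the display, $\fm^n=J\fm^{n-1}+\fm^{n+1}=J\fm^{n-1}+\fm\cdot\fm^n$ for $n\geq n_0$, so the finitely generated $R$-module $\fm^n/J\fm^{n-1}$ equals $\fm\cdot(\fm^n/J\fm^{n-1})$ and is therefore zero by Nakayama's lemma; thus $\fm^n=J\fm^{n-1}$ for $n\geq n_0$, i.e.\ $J$ is a reduction of $\fm$. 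Being generated by $d=\dim R$ elements, $J$ is then a minimal reduction, by the argument used in Lemma~\ref{mm} (see \cite[Corollary~8.3.6 and Proposition~8.3.7]{Huneke-Swanson}); as there, one passes freely between $R$ and $R_\fm$, since this affects neither $\gr_\fm$, nor the property of being a (minimal) reduction, nor the reduction number.

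The step I expect to be the real content is the normalization --- that either hypothesis forces every $f_i$ out of $\fm^2$ --- since without it the statement fails: for instance the maximal-ideal leading forms of $x_1$ and $x_2^2$ in $\gr_{(x_1,x_2)}\KK[[x_1,x_2]]\cong\KK[x_1,x_2]$ generate an ideal with zero-dimensional quotient, although $(x_1,x_2^2)$ is not a reduction of $(x_1,x_2)$, so $f^*$ must be understood as the image of $f$ in $\fm/\fm^2$. The remaining inputs I would cite rather than reprove are standard: $\gr_\fm(R)$ is Noetherian standard graded with $\dim\gr_\fm(R)=\dim R$; for a finitely generated graded module over it, Krull dimension $0$, finite length, and vanishing in high degrees coincide; and no reduction of $\fm$ is generated by fewer than $\dim R$ elements (the analytic spread of $\fm$ is $\dim R$). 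Everything after the normalization is the short Nakayama bookkeeping above.
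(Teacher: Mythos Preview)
The paper does not prove this proposition; it is cited from \cite[Proposition~3.8]{Verma} and used as a black box, so there is no argument in the paper to compare yours against.

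Your proof is the standard one and is correct. Your caution about the normalization is warranted and sharp: with the paper's convention that $f^*$ denotes the initial form of $f$ in $\gr_\fm(R)$, the implication ``$f_1^*,\dots,f_d^*$ a homogeneous system of parameters $\Rightarrow$ $(f_1,\dots,f_d)$ a minimal reduction'' is false as literally stated, and your example $(x_1,x_2^2)$ in $\KK[[x_1,x_2]]$ exhibits this. The statement becomes correct if one either reads $f^*$ as the image of $f$ in $\fm/\fm^2$ (so that $f_i^*=0$ whenever $f_i\in\fm^2$, which cannot occur in a system of parameters), or adds the hypothesis $f_i\in\fm\setminus\fm^2$; your Nakayama argument shows that the forward direction forces this anyway. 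In the paper's only application (Proposition~\ref{lem:gr}) the elements in question are $\x^{\a_1},\dots,\x^{\a_d}$, which are minimal generators of $\fm$ and hence lie in $\fm\setminus\fm^2$, so the ambiguity is harmless there.
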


 Our  first aim is to characterize the Cohen-Macaulay property of $\gr_\fm(R)$. This is a generalization of   \cite[Theorem~7]{G}, where  the case $d=1$ is considered.
 
 \begin{prop}\label{lem:gr}
 If  $(\x^{\a_1},\dots,\x^{\a_d})$ is a reduction ideal of $\fm$, then the following statements are equivalent:
 		\begin{enumerate}
 			\item[(a)] $\gr_\fm(R)$ is a Cohen-Macaulay ring;
 			\item[(b)]$(\x^{\a_1})^*,\dots,(\x^{\a_d})^*$ provide a regular sequence in $\gr_\fm(R)$;
 			\item[(c)]$R$ is Cohen-Macaulay and $(\x^{\a_1},\dots,\x^{\a_d})\cap\fm^n=(\x^{\a_1},\dots,\x^{\a_d})\fm^{n-1}$,  for all $n\geq1$;
 			\item[(d)] $R$ is Cohen-Macaulay and  $(\x^{\a_i})^*$ is a non-zero-divisor in $\gr_\fm(R)$, for $i=1,\dots,d$.
 		\end{enumerate}
  \end{prop}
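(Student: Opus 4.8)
The plan is to prove the four equivalences by establishing the cycle (a) $\Rightarrow$ (b) $\Rightarrow$ (c) $\Rightarrow$ (d) $\Rightarrow$ (a), relying on Proposition~\ref{Verma} to translate "minimal reduction" into "homogeneous system of parameters" and on the Valabrega--Valla criterion for the key step. First I would note that by Lemma~\ref{mm}(1) the hypothesis that $I=(\x^{\a_1},\dots,\x^{\a_d})$ is a reduction ideal of $\fm$ makes it a \emph{minimal} reduction, so by Proposition~\ref{Verma} the initial forms $(\x^{\a_1})^*,\dots,(\x^{\a_d})^*$ form a homogeneous system of parameters of $\gr_\fm(R)$, which is a graded ring of dimension $d$.

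For (a) $\Rightarrow$ (b): since $\gr_\fm(R)$ is Cohen-Macaulay of dimension $d$, any homogeneous system of parameters is a regular sequence; apply this to $(\x^{\a_1})^*,\dots,(\x^{\a_d})^*$. For (b) $\Rightarrow$ (c): the fact that $(\x^{\a_1})^*,\dots,(\x^{\a_d})^*$ is a regular sequence in $\gr_\fm(R)$ is exactly the Valabrega--Valla condition, which is equivalent to the system of equalities $(\x^{\a_1},\dots,\x^{\a_d})\cap\fm^n=(\x^{\a_1},\dots,\x^{\a_d})\fm^{n-1}$ for all $n\geq 1$ (see \cite[Corollary~2.7]{Valabrega-Valla} or the treatment in \cite{Huneke-Swanson}); and once this holds, a standard length computation together with the fact that $\x^{\a_1},\dots,\x^{\a_d}$ is a system of parameters shows $R$ is Cohen-Macaulay — indeed one gets $\lambda_R(R/\fm^{n})$ agrees with the Hilbert polynomial of a Cohen-Macaulay ring, or more directly, a regular sequence of initial forms lifts to show $\x^{\a_1},\dots,\x^{\a_d}$ is a regular sequence on $R$. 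For (c) $\Rightarrow$ (d): the intersection equalities again give, via Valabrega--Valla, that each partial sequence behaves well; more precisely, $(\x^{\a_1},\dots,\x^{\a_d})\cap\fm^{n}=(\x^{\a_1},\dots,\x^{\a_d})\fm^{n-1}$ for all $n$ forces $(\x^{\a_i})^*$ to be a non-zero-divisor in $\gr_\fm(R)$ for each $i$, because a zero-divisor relation $(\x^{\a_i})^* g^* = 0$ with $g\in\fm^{n}\setminus\fm^{n+1}$ would give $\x^{\a_i}g\in\fm^{n+2}$, contradicting the intersection equality applied to the ideal generated by $\x^{\a_i}$ alone (here one uses that $\x^{\a_i}$, being part of a system of parameters on the Cohen-Macaulay ring $R$, is a non-zero-divisor on $R$). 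Finally (d) $\Rightarrow$ (a): if $R$ is Cohen-Macaulay and each $(\x^{\a_i})^*$ is a non-zero-divisor, one shows inductively that the whole sequence $(\x^{\a_1})^*,\dots,(\x^{\a_d})^*$ is regular — the inductive step requires checking that $(\x^{\a_{j+1}})^*$ remains a non-zero-divisor modulo $((\x^{\a_1})^*,\dots,(\x^{\a_j})^*)$, for which one passes to the ring $R/(\x^{\a_1},\dots,\x^{\a_j})$ and uses that the associated graded of this quotient is the quotient of $\gr_\fm(R)$ by the regular sequence built so far — and then $\gr_\fm(R)$ has a regular sequence of length $d=\dim\gr_\fm(R)$, hence is Cohen-Macaulay.

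I expect the main obstacle to be the careful bookkeeping in the implication (d) $\Rightarrow$ (a), specifically the verification that in the quotient $R/(\x^{\a_1},\dots,\x^{\a_j})$ one still has a monomial (hence minimal) reduction situation and that the associated graded ring of the quotient equals the expected quotient of $\gr_\fm(R)$; this is where one must be sure the ``non-zero-divisor on each piece'' hypothesis propagates, rather than just giving information about $\x^{\a_1}$ alone. A clean way to organize this is to prove once and for all the general lemma that if $x\in\fm$ with $x^*$ a non-zero-divisor in $\gr_\fm(R)$, then $\gr_{\fm/(x)}(R/(x))\cong\gr_\fm(R)/(x^*)$, and then iterate; the reduction ideal hypothesis is preserved because the images of $\x^{\a_{j+1}},\dots,\x^{\a_d}$ still generate a reduction of the maximal ideal of the quotient. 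With that lemma in hand the induction is routine and the equivalences close up.
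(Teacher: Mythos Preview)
Your cycle (a)$\Rightarrow$(b)$\Rightarrow$(c)$\Rightarrow$(d)$\Rightarrow$(a) has a genuine gap at (d)$\Rightarrow$(a), and it is precisely the obstacle you flag but do not resolve. Hypothesis~(d) says that each $(\x^{\a_i})^*$ \emph{individually} is a non-zero-divisor in $\gr_\fm(R)$; it says nothing about $(\x^{\a_{j+1}})^*$ modulo $((\x^{\a_1})^*,\dots,(\x^{\a_j})^*)$. Your induction needs exactly this, and the lemma $\gr_{\fm/(x)}(R/(x))\cong\gr_\fm(R)/(x^*)$ (which is correct) together with the observation that the reduction hypothesis descends (also correct) does not supply it: to iterate you would need condition~(d) for $R/(\x^{\a_1},\dots,\x^{\a_j})$, i.e., that each remaining initial form is a non-zero-divisor in the \emph{quotient} associated graded ring, and that is not what (d) asserts. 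In a general local ring, ``each element of a system of parameters has non-zero-divisor initial form'' is strictly weaker than ``the initial forms form a regular sequence''; since your argument never uses that $R=\KK[S]$, it cannot close this gap.

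The paper takes a different route: it proves (d)$\Rightarrow$(c) directly, and this is where the monomial structure is essential. Because $(\x^{\a_1},\dots,\x^{\a_d})$, $\fm^n$, and $(\x^{\a_1},\dots,\x^{\a_d})\fm^{n-1}$ are all monomial ideals, it suffices to check the Valabrega--Valla equality on monomials. A monomial $\x^\c$ in $(\x^{\a_1},\dots,\x^{\a_d})\cap\fm^n$ satisfies $\c=\a_i+\b$ for a \emph{single} index $i$, and hypothesis~(d) for that one index gives $\ord_S(\c)=\ord_S(\b)+1$, hence $\x^\b\in\fm^{n-1}$. So the ``each $i$ separately'' hypothesis matches the monomial decomposition exactly, with no need to propagate regularity through quotients. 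The remaining implications are organized as (a)$\Leftrightarrow$(b) via Proposition~\ref{Verma}, (b)$\Leftrightarrow$(c) via Valabrega--Valla after observing $\depth(\gr_\fm(R))\leq\depth(R)$, and (b)$\Rightarrow$(d) trivially.

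A smaller issue: in your (c)$\Rightarrow$(d) the phrase ``the intersection equality applied to the ideal generated by $\x^{\a_i}$ alone'' is unjustified, since (c) only gives the equality for the full ideal $(\x^{\a_1},\dots,\x^{\a_d})$. This step is recoverable by passing through (b) (as (c) includes $R$ Cohen-Macaulay, so $\x^{\a_1},\dots,\x^{\a_d}$ is regular on $R$ and Valabrega--Valla applies), but the direct argument as written does not stand.
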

 \begin{proof}
By  Lemma~\ref{mm}, the monomial ideal $(\x^{\a_1},\dots,\x^{\a_d})$ is a minimal reduction ideal of $\fm$. 
If  $R$ is Cohen-Macaulay, then  the system of parameters   $\x^{\a_1},\dots,\x^{\a_d}$ is a regular sequence in $R$.

 (a) $\Leftrightarrow$ (b) 	
  Proposition~~\ref{Verma} implies that  the image of $\x^{\a_1},\dots,\x^{\a_d}$ in  $\gr_\fm(R)$ is a system of parameters. Therefore, $\gr_\fm(R)$ is Cohen-Macaulay if and only if $(\x^{\a_1})^*,\dots,(\x^{\a_d})^*$ provide a regular sequence in $\gr_\fm(R)$.
  
  (b) $\Leftrightarrow$ (c)
  Since $\depth(\gr_{\fm}(R))\leq\depth(R)$, (b) implies that  $R$ is Cohen-Macaulay. Thus, the system of parameters   $\x^{\a_1},\dots,\x^{\a_d}$ is a regular sequence in $R$. Now, the equivalence between (b) and (c) follows by
 Valabrega-Valla criterion \cite{Valabrega-Valla} (see also \cite[Theorem~5.16]{Verma}). 
 
 (d) $\Rightarrow$ (c) By (d) we have  
 \begin{equation}\label{b}
 \x^{\a_i}(\fm^{t-1}\setminus\fm^t)\subseteq\fm^t\setminus\fm^{t+1},
 \end{equation}		
 for all $t\geq 1$ and $1\leq i\leq d$. Let $\x^\c\in(\x^{\a_1},\dots,\x^{\a_d})\cap\fm^n$, for some $n\geq 1$. Then, $\c=\a_i+\b$ where  $\b\in S$ and $1\leq i\leq d$. Let $\ord_S(\b)=t-1$. Then, $\x^\c=\x^{\a_i+\b}\in\fm^t\setminus\fm^{t+1}$ by (\ref{b}). As $\x^c\in\fm^n$, it follows that $n\leq t$ and, equivalently, $\x^\b\in\fm^{n-1}$. Thus, $\x^c\in(\x^{\a_1},\dots,\x^{\a_d})\fm^{n-1}$ and we get (c). 
 
 (b) $\Rightarrow$ (d) It is clear.
 \end{proof}

 We now prove a lemma that will be useful to show other characterizations of the Cohen-Macaulay property of $\gr_{\fm}(R)$.

\begin{lem}\label{lem:r}
	Let  $r=\max\{\ord_S(\w) \, ; \, \w\in\AP(S,E)\}$. If $\ord_S(\a_i+\b)=\ord_S(\b)+1$, for all and $i=1,\dots,d$ and $\b\in S$ with $\ord_S(\b)< r$, then $r=r_I(R)$. 
\end{lem}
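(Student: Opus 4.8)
We want to show that if $r = \max\{\ord_S(\w) : \w \in \Ap(S,E)\}$ and the ``order-additivity'' hypothesis holds — namely $\ord_S(\a_i + \b) = \ord_S(\b) + 1$ for all $i = 1,\dots,d$ and all $\b \in S$ with $\ord_S(\b) < r$ — then $r = r_I(R)$, where $I = (\x^{\a_1},\dots,\x^{\a_d})$ is the monomial minimal reduction.

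**Plan.** The plan is to prove the two inequalities $r_I(R) \le r$ and $r_I(R) \ge r$ separately. The lower bound $r_I(R) \ge r$ is already available: by Remark~\ref{rem:red}, $r_I(R) \ge \max\{\ord_S(\w) : \w \in \Ap(S,E)\} = r$ whenever $I$ is a reduction ideal of $\fm$ (and the hypothesis presupposes this, since $r_I(R)$ is defined). So the real content is the upper bound $r_I(R) \le r$, i.e. that $\fm^{r+1} = I\fm^r$. Since $\fm$ is a monomial ideal and $I$ is monomial, it suffices to show that every monomial $\x^\c$ with $\ord_S(\c) \ge r+1$ lies in $I\fm^r$, equivalently that $\c = \a_i + \b$ for some $i$ and some $\b \in S$ with $\ord_S(\b) \ge r$.

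**Key steps.** First I would record the basic fact (from the definition of order, using a maximal expression) that any $\c \in S$ with $\ord_S(\c) = n \ge 1$ can be written $\c = \a_j + \b$ with $\b \in S$ and $\ord_S(\b) \ge n-1$; the point is whether we can choose $j \le d$ and whether the order of $\b$ is exactly $n-1$ rather than larger. The strategy is to take $\c$ with $\ord_S(\c) \ge r+1$ and ``peel off'' extremal generators one at a time, controlling the order at each stage via the hypothesis. Concretely, write a maximal expression $\c = \sum_{i=1}^{d+s} r_i \a_i$ with $\sum r_i = \ord_S(\c)$. I claim some $r_i$ with $i \le d$ is positive: indeed, using $\deg(\a_{d+j}) \ge 1$ (Theorem~\ref{monomial red}, since $I$ is a reduction) one controls degrees, but more directly one argues that if $\c$ involved only the non-extremal generators its order would be bounded — actually the cleanest route is: by Remark~\ref{rem:red}, $\x^\c$ is divisible by some $\x^{\a_i}$ with $i \le d$, so $\c = \a_i + \b$ with $\b \in S$. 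Now I want $\ord_S(\b) \ge r$. Suppose not, so $\ord_S(\b) = t < r$; then by the hypothesis $\ord_S(\c) = \ord_S(\a_i + \b) = t + 1 \le r$, contradicting $\ord_S(\c) \ge r+1$. Hence $\ord_S(\b) \ge r$, so $\x^\c = \x^{\a_i}\x^\b \in I\fm^r$. This gives $\fm^{r+1} \subseteq I\fm^r$, and since $I \subseteq \fm$ the reverse inclusion is trivial, so $\fm^{r+1} = I\fm^r$, whence $r_I(R) \le r$.

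**Main obstacle.** The delicate point is establishing that $\c$ with $\ord_S(\c) \ge r+1$ factors as $\a_i + \b$ with $i \le d$ \emph{and} that the hypothesis of Lemma~\ref{lem:r} can legitimately be applied to this particular $\b$ — i.e. that $\ord_S(\b) < r$ is the right case distinction and that $\b \in S$. Divisibility by some $\x^{\a_i}$, $i\le d$, follows from Remark~\ref{rem:red} provided $r_I(R)+1 \le \ord_S(\c)$; but a priori we only know $r+1 \le \ord_S(\c)$ and $r \le r_I(R)$, so Remark~\ref{rem:red} is not immediately applicable with the bound $r$. The honest fix is an induction on $\ord_S(\c)$: one shows directly that any $\c$ with $\ord_S(\c) \ge r+1$ has $\x^\c$ divisible by an extremal $\x^{\a_i}$ — which one can extract from the proof that $I$ is a reduction in Theorem~\ref{monomial red} — or, alternatively, prove $\fm^{n+1} = I\fm^n$ by induction on $n \ge r$: for $n = r$ one needs the base case, and then the inductive step $\fm^{n+2} = \fm\cdot\fm^{n+1} = \fm\cdot I\fm^n = I\fm^{n+1}$ is automatic. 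So the work concentrates entirely in the base case $\fm^{r+1} = I\fm^r$, and the order-additivity hypothesis is exactly what forces a monomial of order $r+1$ to split off an extremal generator while leaving a factor of order $\ge r$. I would present the argument in that order: reduce to the base case by induction, then prove the base case by the peeling argument above, invoking the hypothesis to rule out the drop to order $< r$.
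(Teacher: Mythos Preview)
Your approach is essentially the paper's: prove $r_I(R)\le r$ by showing $\fm^{r+1}=I\fm^r$, then invoke Remark~\ref{rem:red} for the reverse inequality. The core step---if $\c=\a_i+\b$ with $i\le d$ and $\ord_S(\b)<r$, then the hypothesis forces $\ord_S(\c)=\ord_S(\b)+1\le r$, a contradiction---is exactly what the paper does.

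However, the ``main obstacle'' you identify is not an obstacle at all, and your proposed workarounds do not resolve it: Remark~\ref{rem:red} is circular as you note; the proof of Theorem~\ref{monomial red} only yields divisibility by an extremal monomial for orders $\ge sl$, which may exceed $r+1$; and your induction on $n$ still leaves the base case $n=r$ with the same unresolved splitting. The splitting $\c=\a_i+\b$ with $1\le i\le d$ follows immediately from the \emph{definition} of $r$: if $\ord_S(\c)\ge r+1>r=\max\{\ord_S(\w):\w\in\Ap(S,E)\}$, then $\c\notin\Ap(S,E)$, which by the definition of the Ap\'ery set means $\c-\a_i\in S$ for some $i\in\{1,\dots,d\}$. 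This is precisely the step the paper takes (implicitly) when it writes ``$\ord_S(\a)\ge r+1$ and, then, $\a=\a_i+\b$''. Once you see this, the proof is three lines and no induction is needed.
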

\begin{proof}
	Given $\x^\a\in\fm^{r+1}$, we have $\ord_S(\a) \geq r+1$ and, then, $\a=\a_i+\b$ for some $1\leq i\leq d$ and $\b\in S$. By assumption, it is not possible that $\ord_S(\b)< r$.  Therefore, $\x^\b \in \fm^r$ and, consequently, $\x^\a\in I\fm^r$. This shows that $r_I(R)\leq r$ and the equality follows by Remark~\ref{rem:red}.
\end{proof}

 The following theorem generalizes  some results proved in \cite{G} about numerical semigroup rings.

\begin{thm}\label{thm:gr}
The following statements are equivalent:
\begin{enumerate}
	\item[(a)] $\gr_{\fm}(R)$ is Cohen-Macaulay and  $(\x^{\a_1},\dots,\x^{\a_d})$ is a reduction ideal of $\fm$;
	\item[(b)] $R$ is  Cohen-Macaulay and $\ord_S(\b+\a_i)=\ord_S(\b)+1$, for all $\b\in S$ and  $i=1,\dots,d$;
	\item[(c)] $R$ is Cohen-Macaulay and  $\ord_S(\b+\sum^d_{i=1}n_i\a_i)=\ord_S(\b)+\sum^d_{i=1}n_i$, for all $\b\in\Ap(S,E)$ and $n_1,\dots,n_d\in\NN$.
\end{enumerate}
	In this case, the reduction number is equal to $\max\{\ord_S(\b) \, ; \, \b\in\Ap(S,E)\}$.
\end{thm}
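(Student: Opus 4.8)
The plan is to prove the cycle (a)$\Rightarrow$(b)$\Rightarrow$(c)$\Rightarrow$(a) and to read off the reduction-number statement from the last step. The pivot is Proposition~\ref{lem:gr}: \emph{whenever} $I=(\x^{\a_1},\dots,\x^{\a_d})$ is a reduction ideal of $\fm$, the ring $\gr_\fm(R)$ is Cohen--Macaulay if and only if $R$ is Cohen--Macaulay and each $(\x^{\a_i})^*$ is a non-zero-divisor in $\gr_\fm(R)$. So the one computation I need is the dictionary between ``$(\x^{\a_i})^*$ is a non-zero-divisor'' and the order condition in (b). For this I would use the standard monomial description of $\gr_\fm(R)$: for each $t$, the $\KK$-space $\fm^t/\fm^{t+1}$ has basis $\{(\x^\b)^*\,;\,\b\in S,\ \ord_S(\b)=t\}$, and $(\x^\b)^*(\x^\c)^*$ equals $(\x^{\b+\c})^*$ if $\ord_S(\b+\c)=\ord_S(\b)+\ord_S(\c)$ and vanishes otherwise. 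Since $\a_i$ is a minimal generator of $S$ we have $\ord_S(\a_i)=1$, so multiplication by $(\x^{\a_i})^*$ sends the basis vector $(\x^\b)^*$ to $(\x^{\b+\a_i})^*$ when $\ord_S(\b+\a_i)=\ord_S(\b)+1$ and to $0$ otherwise. As $\b\mapsto\b+\a_i$ is injective and $\ord_S(\b+\a_i)\geq\ord_S(\b)+1$ always holds, it follows that $(\x^{\a_i})^*$ is a non-zero-divisor in $\gr_\fm(R)$ precisely when $\ord_S(\b+\a_i)=\ord_S(\b)+1$ for every $\b\in S$.

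For (a)$\Rightarrow$(b): statement (a) contains the hypothesis of Proposition~\ref{lem:gr}, so $R$ is Cohen--Macaulay and each $(\x^{\a_i})^*$ is a non-zero-divisor; the dictionary above turns the latter into the order identity in (b). For (b)$\Rightarrow$(c): I would induct on $n_1+\dots+n_d$, applying the order identity of (b) to the element $\b+\sum_i n_i\a_i\in S$ in the inductive step.

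For (c)$\Rightarrow$(a): first recover (b). Every $\gamma\in S$ can be written as $\gamma=\b+\sum_{i=1}^d m_i\a_i$ with $\b\in\Ap(S,E)$ and $m_i\in\NN$ --- repeatedly subtract an $\a_i$ as long as the result stays in $S$; this terminates since $\deg$ drops by $1$ at each step and is bounded below. Applying (c) to $\b$ with the exponent vector $(m_i)$, and then with one coordinate increased by $1$, yields $\ord_S(\gamma+\a_j)=\ord_S(\gamma)+1$, which is (b). Now the order condition of (b) is exactly the hypothesis of Lemma~\ref{lem:r}, so $I$ is a minimal reduction ideal of $\fm$ with $r_I(R)=\max\{\ord_S(\w)\,;\,\w\in\Ap(S,E)\}$; this is the final assertion of the theorem. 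Since $I$ is now known to be a reduction ideal, Proposition~\ref{lem:gr} applies once more: $R$ is Cohen--Macaulay by (b), each $(\x^{\a_i})^*$ is a non-zero-divisor by the dictionary, hence $\gr_\fm(R)$ is Cohen--Macaulay, and together with ``$I$ is a reduction ideal'' this gives (a).

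The only genuinely new ingredient is the dictionary between the regular-sequence condition on the $(\x^{\a_i})^*$ and the combinatorial order condition; once the monomial basis of $\gr_\fm(R)$ is recorded this is routine, and everything else is bookkeeping with Proposition~\ref{lem:gr} and Lemma~\ref{lem:r}. The point that needs care is that ``$I$ is a reduction ideal'' is built into (a) but has to be derived --- via Lemma~\ref{lem:r} --- before Proposition~\ref{lem:gr} can be applied under (b) and (c), which is what dictates the order of the steps in (c)$\Rightarrow$(a).
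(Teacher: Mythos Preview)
Your argument is correct and follows the same overall structure as the paper: the cycle (a)$\Rightarrow$(b)$\Rightarrow$(c)$\Rightarrow$(b)$\Rightarrow$(a), hinging on Proposition~\ref{lem:gr}. The one substantive difference is in how you recover ``$I$ is a reduction ideal'' from (b). The paper argues directly via the degree function: applying the order identity of (b) to $c_j\a_{d+j}=\sum_i l_{j_i}\a_i$ gives $c_j\leq\ord_S(c_j\a_{d+j})=\sum_i l_{j_i}$, whence $\deg(\a_{d+j})\geq 1$, and then Theorem~\ref{monomial red} yields the reduction property. You instead invoke Lemma~\ref{lem:r}; this works because the proof of that lemma actually establishes $\fm^{r+1}=I\fm^r$ (hence $I$ is a reduction), not merely the equality of reduction numbers stated. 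Your route is a bit slicker and simultaneously delivers the reduction-number formula, while the paper's route is independent of Lemma~\ref{lem:r} and ties (b) back to the geometric criterion of Theorem~\ref{monomial red}. You are also more explicit than the paper in spelling out the dictionary between ``$(\x^{\a_i})^*$ is a non-zero-divisor'' and the order condition; the paper leaves this implicit in the proof of Proposition~\ref{lem:gr}(d)$\Rightarrow$(c).
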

\begin{proof}
(a) $\Rightarrow$ (b) It follows by Proposition~\ref{lem:gr}.

(b) $\Rightarrow$ (c) This is clear.

(c) $\Rightarrow$ (b) Any element $\b\in S$ may be written as $\b=\sum^d_{j=1}n_j\a_j+\b'$ for some $\b'\in\Ap(S,E)$ and $n_1,\dots,n_d\in\NN$. Let $1\leq i\leq d$. Then,
\[
\b+\a_i=\sum^{i-1}_{j=1}n_j\a_j+(1+n_i)\a_i+\sum^{d}_{j=i+1}n_j\a_j+\b'.
\]
Therefore, $\ord_S(\b+\a_i)=1+\sum^d_{j=1}n_j+\ord_S(\b')=1+\ord_S(\b)$.

(b) $\Rightarrow$ (a)
 Let $c_j$ denote the smallest non-zero integer such that 
$c_j\a_{d+j}\in\sum^d_{i=1}\NN \a_i$
for $j=1,\dots,r$.
Then, there are  unique integers  $l_{j_1},\dots,l_{j_d}\in\NN$ such that 
\[
c_j\a_{d+j}=\sum^d_{i=1}l_{j_i}\a_i.
\]
Then, $c_j\leq\ord_S(c_j\a_{d+j})=\ord_S(\sum^d_{i=1}l_{j_i}\a_i)=\sum^d_{i=1}l_{j_i},$
where the last equality holds by (b). Now,  $\deg(\a_{d+j})=(\sum^d_{i=1}l_{j_i})/c_j\geq 1$, which means that  $(\x^{\a_1},\dots,\x^{\a_d})$ is a reduction ideal of $\fm$ by Theorem~\ref{monomial red}. Moreover, $\gr_{\fm}(R)$ is Cohen-Macaulay by the previous lemma.   
The last statement follows by Lemma~\ref{lem:r}. 
\end{proof}

\begin{exmp}\label{ex}
	Let $S$ be generated by $\a_1=(6,0), \a_2=(0,4), \a_3=(3,3)$, and $\a_4=(3,9)$. Then, $\deg(\a_i)\geq1$ for $i=3,4$, which shows that $\fm$ has a monomial minimal reduction ideal, by Theorem~\ref{thm:gr}.
As 	$\Ap(S,E)=\{(0,0),(3,3),(3,9),(6,6)\}$,  $\KK[S]$ is Cohen-Macaulay by Lemma~\ref{lemAp}. However, since the order of $\a_4+\a_1=3\a_3$ is greater than $2$, $\gr_{\fm}(R)$ is not Cohen-Macaulay.
\end{exmp}

Let $I$ be a minimal reduction ideal of $\fm$. Then, the reduction number of $\fm$ with respect to $I$ is at most $e(R)$, see \cite{Trung-1987}.  If $\KK$ has zero characteristic, Vasconcelos showed in \cite{Vasconcelos-1996}, that $r_I(R)\leq e(R)-1$. For  homogeneous affine simplicial semigroups, $I$ is a reduction ideal of $\fm$ and in \cite{Hoa-Stuckrad} it is proved that $r_I(R)\leq e(R)-\codim(R)$.  
In the following we find another class of simplicial affine semigroups satisfying this property.

\begin{prop}\label{HS}
	Let $I=(\x^{\a_1},\dots,\x^{\a_d})$ be a minimal reduction ideal of $\fm$. If $\gr_\fm(R)$ is Cohen-Macaulay, then  $r_I(R)\leq e(R)-\codim(R)$.
\end{prop}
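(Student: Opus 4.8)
The plan is to reduce everything to a count of Ap\'ery elements, using the last assertion of Theorem~\ref{thm:gr}. Since $I=(\x^{\a_1},\dots,\x^{\a_d})$ is a reduction ideal and $\gr_\fm(R)$ is Cohen--Macaulay, Proposition~\ref{lem:gr} shows that $R$ is Cohen--Macaulay, and Theorem~\ref{thm:gr} gives $r_I(R)=r$, where $r:=\max\{\ord_S(\w)\,;\,\w\in\AP(S,E)\}$. Moreover $e(R)=|\AP(S,E)|$ by Proposition~\ref{prop:mon} (conditions (b) and (c) hold, hence (a) does). Since $\codim(R)=s$, it therefore suffices to prove the combinatorial inequality $|\AP(S,E)|\ge r+s$, and I would do this by exhibiting $r+s$ pairwise distinct elements of $\AP(S,E)$.

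First I would record two elementary facts. Every minimal generator $\a_k$ of $S$ satisfies $\ord_S(\a_k)=1$, since otherwise it would be a sum of two nonzero elements of $S$; and each non-extremal generator $\a_{d+j}$ lies in $\AP(S,E)$, because $\a_{d+j}-\a_i\in S$ for some $1\le i\le d$ would force $\a_{d+j}=\a_i+\b$ with $\b\in S\setminus\{\0\}$, contradicting minimality. Thus $\AP(S,E)$ already contains $\0$ (order $0$) together with the $s$ distinct elements $\a_{d+1},\dots,\a_{d+s}$ (order $1$). Next, pick $\w\in\AP(S,E)$ with $\ord_S(\w)=r$; every maximal expression of $\w$ avoids $\a_1,\dots,\a_d$ (otherwise $\w-\a_i\in S$), so $\w=\sum_{j=1}^{s}c_j\a_{d+j}$ with $\sum_j c_j=r$. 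For each integer $0\le k\le r$ choose integers $0\le c'_j\le c_j$ with $\sum_j c'_j=k$, and set $\w_k=\sum_j c'_j\a_{d+j}$ and $\w''_k=\w-\w_k$. Then $\w_k\in\AP(S,E)$, since $\w_k-\a_i\in S$ would give $\w-\a_i=(\w_k-\a_i)+\w''_k\in S$; and superadditivity of $\ord_S$ gives $r=\ord_S(\w)\ge\ord_S(\w_k)+\ord_S(\w''_k)\ge\sum_j c'_j+\sum_j(c_j-c'_j)=r$, forcing $\ord_S(\w_k)=k$. Hence $\AP(S,E)$ meets every order $0,1,\dots,r$.

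Collecting, $\AP(S,E)$ contains $\0$, the $s$ distinct generators $\a_{d+1},\dots,\a_{d+s}$, and one element of each order $2,\dots,r$; these $1+s+(r-1)=r+s$ elements are pairwise distinct, since elements of distinct order differ and the order-one generators are distinct from one another. This yields $e(R)=|\AP(S,E)|\ge r+s=r_I(R)+\codim(R)$, as claimed. The only step I expect to require genuine care is the exact identity $\ord_S(\w_k)=k$, which rests on superadditivity of $\ord_S$ and on the observation that a maximal expression of a top-order Ap\'ery element uses only the non-extremal generators; the remainder is bookkeeping. (The degenerate cases are harmless: if $s=0$ then $R$ is a polynomial ring and $r_I(R)=0\le 1=e(R)$, while $r=0$ forces $\AP(S,E)=\{\0\}$ and hence $s=0$.)
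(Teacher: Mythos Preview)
Your proof is correct and follows essentially the same strategy as the paper's: reduce to the combinatorial inequality $|\AP(S,E)|\ge r+s$ via Proposition~\ref{prop:mon} and Theorem~\ref{thm:gr}, then exhibit $r+s$ distinct Ap\'ery elements by taking $\0$, the $s$ non-extremal generators, and a chain $\w_2,\dots,\w_r$ of partial sums below a maximal-order Ap\'ery element $\w$. Your construction of the $\w_k$ as explicit partial sums and your justification of $\ord_S(\w_k)=k$ via superadditivity are a bit more detailed than the paper's (which simply asserts the existence of such $\w_i$), and your argument handles the case $r=1$ uniformly rather than splitting it off as the paper does via Proposition~\ref{min red and min mult}.
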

\begin{proof}
	By Theorem~\ref{thm:gr}, we have $\ord_S(\b+\a_i)=\ord_S(\b)+1$ for all $\b\in S$ and $1\leq i\leq d$. Thus, $r_I(R)=\max\{\ord_S(\w) \, ; \, \w\in\AP(S,E)\}$, by Lemma~\ref{lem:r}.  If $r_I(R)=1$, then $R$ is a ring of minimal multiplicity and the claim follows by Proposition \ref{min red and min mult}. Thus, we assume that $r_I(R) \geq 2$.
	Let $\w\in\AP(S,E)$ be such that $r_I(R)=\ord_S(\w)$. Then, there are some elements $\w_i\in S$ such that $\w_i\preceq_S \w$ and $\ord_S(\w_i)=i$ for $i=2,\ldots,r_I(R)$. Since $\{\0,\a_{d+1},\dots,\a_{d+s},\w_2,\dots, \w_{r_I(R)}\}\subseteq\AP(S,E)$  and the elements of the first set are pairwise distinct, we derive $|\AP(S,E)|\geq r_I(R) +\codim(R)$. Now, the result follows by  Proposition~\ref{prop:mon}.
\end{proof}

The Cohen-Macaulay condition of $\gr_\fm(R)$ in Proposition~\ref{HS} can not be removed, even if $R$ is Cohen-Macaulay. 

\begin{exmp}\label{ex:4.7}
	Let $S$ be the affine semigroup generated by 
	$\a_1=(6,0), \a_2=(0,4), \a_3=(3,3), \a_4=(3,9)$. As we have seen in Example~\ref{ex}, $R$ is Cohen-Macaulay but $\gr_\fm(R)$ is not. By Proposition~\ref{prop:mon}, $e(R)=|\Ap(S,E)|=4$. Moreover, since $\x^{3\a_3}\in\fm^3\setminus(\x^{\a_1},\x^{\a_2})\fm^2$, we have $r_I(R)>2=e(R)-2$. 
\end{exmp}

\begin{rem}
	In Proposition~\ref{monomial red} we found another bound for the reduction number of $R$ without assuming that $\gr_\fm(R)$ is Cohen-Macaulay. These two bounds are not comparable in general. For instance, when $R$ is a Cohen-Macaulay ring of minimal multiplicity, we have $r_I(R)=e(R)-\codim(R)=|\Ap(S,E)|-\codim(R)=1$ by Proposition~\ref{min red and min mult}, which can be strictly smaller than $l\codim(R)-1$ obtained in Proposition~\ref{monomial red}. Now, consider the semigroup generated by  $\a_1=(5,0), \a_2=(0,5), \a_3=(6,0), \a_4=(0,6)$. Then, $\KK[S]$ is Cohen-Macaulay and $\fm$ has a monomial reduction. Thus, $e(R)=|\Ap(S,E)|=25$, by Proposition~\ref{prop:mon}, and $l\codim(R)-1=11<23=e(R)-\codim(R)$. 
\end{rem}

 We conclude this section by investigating the Gorenstein property of $\gr_{\fm}(R)$. Define the order $\preceq$  in $\ZZ^d$ by $\a\preceq \b$,  if $\b-\a\in S$.
 Then, $\KK[S]$ is a  Gorenstein ring if and only  if it is Cohen-Macaulay and $\Ap(S,E)$ has a single maximal element with respect to $\preceq$, see \cite[Theorems~4.6 and 4.8]{RG-1998}.

\begin{rem}\label{beta}
Note that $\{\x^{\a_1},\ldots,\x^{\a_{d}}\}$ is a system of parameters for $R$.   Consider the zero-dimensional ring $\bar{R}=R/J$, where $J=(\x^{\a_1},\ldots,\x^{\a_d})$ and let $\bar{\fm}=\fm/J$. Then, $\bar{R}$ has length $\lambda(\bar{R})=|\AP(S,E)|$. 
Since $\ini(J)=J$, we have  
	\[
	\gr_\fm(R)/J=\gr_{\bar{\fm}}(\bar{R})=\bigoplus_{i\geq 0}\bar{G}_i,
	\]
	 where
	$\bar{G}_i=\frac{(\fm/J)^i}{(\fm/J)^{i+1}}$ and $\lambda_R(\bar{G}_i)=|\{\w\in\AP(S,E) \, ; \, \ord_S(\w)=i\}|$. 
	
	In particular, $\bar{G}_i=0$ for all $i>\max\{\ord_S(\w) \, ; \, \w\in\AP(S,E)\}$. We will
	use the following notations to refer to these numbers:
	\begin{equation*}
	\beta_i(S):=\lambda_R(\bar{G}_i)=|\{\w\in\AP(S,E) \, ; \, \ord_S(\w)=i\}|,
	\end{equation*}
	\begin{equation*}
	d(S):=\max\{i\in\NN \, ; \, \bar{G}_i\neq 0\}=\max\{\ord_S(\w) \, ; \, \w\in\AP(S,E)\}.
	\end{equation*}
\end{rem}

 For elements $\a , \b\in S$, we write $\a\preceq_M\b$ if $\a\preceq\b$ and $\ord_S(\b)=\ord_S(\b-\a)+\ord_S(\a)$.
The following lemma is obtained from \cite[Lemma~2.7]{JZ-2014}, replacing the Ap\'{e}ry set with respect to an element $x$, by $\AP(S,E)$. This extends the result  proved by Bryant in \cite{Bryant} for numerical semigroups. 
 
\begin{lem}\label{MG}
	If $R$ is Gorenstein, then the following statements
	are equivalent:
	\begin{enumerate}
		\item[(a)] $\gr_{\bar{\fm}}(\bar{R})$ is Gorenstein;
		\item[(b)] $\beta_i(S)=\beta_{d(S)-i}(S)$ for $0\leq i\leq
		\lfloor\frac{d(S)+1}{2}\rfloor$;
		\item[(c)] $\Max_{\preceq_M} \AP(S,E)$ is a singleton. 
	\end{enumerate}
\end{lem}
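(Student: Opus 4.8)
The plan is to pass to the Artinian reduction $\bar{R}=R/J$ of Remark~\ref{beta}, where $J=(\x^{\a_1},\dots,\x^{\a_d})$. Since $R$ is Gorenstein it is Cohen--Macaulay, so the system of parameters $\x^{\a_1},\dots,\x^{\a_d}$ is a regular sequence and $\bar{R}$ is an Artinian local Gorenstein ring. Writing $d=d(S)$, this forces $\bar{\fm}^{d}\neq 0=\bar{\fm}^{d+1}$ and $\Soc(\bar{R})=\bar{\fm}^{d}$, which is one-dimensional over $\KK$; in particular $\beta_d(S)=1$. By Remark~\ref{beta} the ring $G:=\gr_{\bar{\fm}}(\bar{R})$ is a standard graded Artinian $\KK$-algebra with Hilbert function $i\mapsto\beta_i(S)$ and top degree $d$, and, being Artinian, it is Gorenstein exactly when $\dim_\KK\Soc(G)=1$. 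The proof then rests on two things: an explicit description of $\Soc(G)$, which yields (a)$\Leftrightarrow$(c); and the classical fact that for an Artinian Gorenstein ring the associated graded ring is Gorenstein iff its Hilbert function is symmetric, which yields (a)$\Leftrightarrow$(b). This is the strategy of \cite[Lemma~2.7]{JZ-2014} with $\AP(S,E)$ replacing the Ap\'ery set with respect to a single element, and one may either adapt that argument verbatim or run the direct route sketched below.

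For (a)$\Leftrightarrow$(c), recall that $\{\x^{\w}\ ;\ \w\in\AP(S,E)\}$ is a $\KK$-basis of $\bar{R}$, hence $\{(\x^{\w})^{*}\ ;\ \w\in\AP(S,E)\}$ is a homogeneous $\KK$-basis of $G$ with $(\x^{\w})^{*}$ of degree $\ord_S(\w)$, and $G$ is generated in degree one by $(\x^{\a_{d+1}})^{*},\dots,(\x^{\a_{d+s}})^{*}$. Thus $(\x^{\w})^{*}$ lies in $\Soc(G)$ iff $(\x^{\a_{d+j}})^{*}(\x^{\w})^{*}=0$ for $j=1,\dots,s$; and this product, being the class of $\x^{\w+\a_{d+j}}$ in degree $\ord_S(\w)+1$, is nonzero precisely when $\w+\a_{d+j}\in\AP(S,E)$ and $\ord_S(\w+\a_{d+j})=\ord_S(\w)+1$, that is, precisely when $\w\preceq_M\w+\a_{d+j}$ and $\w+\a_{d+j}\in\AP(S,E)$. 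I would then check, using that in a maximal expression of any $\b\in S\setminus\{\0\}$ every generator $\a_k$ that occurs satisfies $\ord_S(\b-\a_k)=\ord_S(\b)-1$, and that $\w+\a_i\notin\AP(S,E)$ for $i\leq d$, that $\w$ fails to be $\preceq_M$-maximal in $\AP(S,E)$ exactly when some such index $j$ exists. Hence $\Soc(G)$ has $\KK$-basis $\{(\x^{\w})^{*}\ ;\ \w\in\Max_{\preceq_M}\AP(S,E)\}$, so $G$ is Gorenstein iff $\Max_{\preceq_M}\AP(S,E)$ is a singleton.

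The implication (a)$\Rightarrow$(b) is the standard fact that an Artinian standard graded Gorenstein $\KK$-algebra has palindromic Hilbert function: its socle is $G_d$, so a short chase with the powers $G_1^{d-i}$ shows the multiplication pairings $G_i\times G_{d-i}\to G_d\cong\KK$ are perfect, whence $\beta_i(S)=\beta_{d-i}(S)$. The step I expect to be the main obstacle is (b)$\Rightarrow$(a), because here one genuinely needs $\bar{R}$ itself to be Gorenstein (symmetry of the $\beta_i(S)$ does not by itself force $G$ Gorenstein). I would fix a $\KK$-linear form $\phi\colon\bar{R}\to\KK$ that is nonzero on $\Soc(\bar{R})$; since $\bar{R}$ is Gorenstein, $(a,b)\mapsto\phi(ab)$ is a perfect pairing on $\bar{R}$. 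From $\bar{\fm}^{k}\bar{\fm}^{d-k+1}\subseteq\bar{\fm}^{d+1}=0$ one gets $\bar{\fm}^{d-k+1}\subseteq(\bar{\fm}^{k})^{\perp}$, and a dimension count using the symmetry $\beta_i(S)=\beta_{d-i}(S)$ upgrades this to $\bar{\fm}^{d-k+1}=(\bar{\fm}^{k})^{\perp}$ for every $k$. Consequently the pairing descends to perfect pairings $\bar{\fm}^{k}/\bar{\fm}^{k+1}\times\bar{\fm}^{d-k}/\bar{\fm}^{d-k+1}\to\KK$, which, under the isomorphism $\Soc(\bar{R})\cong\KK$ induced by $\phi$, are exactly the multiplication pairings $G_k\times G_{d-k}\to G_d$. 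Perfectness of all of these forces $\Soc(G)\subseteq G_d$, since an element of $\Soc(G)$ of degree $k<d$ annihilates $G_{d-k}=G_1^{d-k}$ and hence pairs trivially; therefore $\Soc(G)=G_d$ is one-dimensional and $G$ is Gorenstein, closing the cycle.
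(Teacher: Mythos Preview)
Your argument is correct and is precisely the route the paper intends: the paper gives no proof beyond pointing to \cite[Lemma~2.7]{JZ-2014} and remarking that one replaces the Ap\'ery set with respect to a single element by $\AP(S,E)$, and your sketch carries out exactly that adaptation (socle description via the fine $\NN^d$-grading for (a)$\Leftrightarrow$(c), and the perfect-pairing/duality argument using that $\bar R$ is Artinian Gorenstein for (a)$\Leftrightarrow$(b)). The only point you might make explicit is that $\Soc(G)$ is spanned by the monomial classes $(\x^{\w})^{*}$ because $G$ inherits the $\NN^d$-grading, so it suffices to test socle membership on these; otherwise nothing is missing.
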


\begin{prop}\label{4.7}
Assume that $(\x^{\a_1},\dots,\x^{\a_d})$ is a  reduction ideal of $\fm$. The following statements are equivalent:
	\begin{enumerate}
		\item[(a)] $\gr_{\fm}(R)$ is Gorenstein;
		\item[(b)] $\gr_{\fm}(R)$ is Cohen-Macaulay and  $\Max_{\preceq_M} \AP(S,E)$ has a single element. 
	\end{enumerate}
\end{prop}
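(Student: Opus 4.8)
The plan is to prove the equivalence by combining the Cohen-Macaulay characterization from Theorem~\ref{thm:gr} with the description of the zero-dimensional reduction $\bar R = R/J$ from Remark~\ref{beta} and the Gorenstein criterion in Lemma~\ref{MG}. The key observation is that, under the standing hypothesis that $I=(\x^{\a_1},\dots,\x^{\a_d})$ is a reduction ideal of $\fm$, the ideal $J=I$ has $\ini(J)=J$, so that $\gr_\fm(R)/J \cong \gr_{\bar\fm}(\bar R)$; moreover, since $\x^{\a_1},\dots,\x^{\a_d}$ form a system of parameters for $R$ whose images form a homogeneous system of parameters of $\gr_\fm(R)$ by Proposition~\ref{Verma}, passing to the quotient by $J$ is an Artinian reduction of $\gr_\fm(R)$.

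First I would prove (a)$\Rightarrow$(b). If $\gr_\fm(R)$ is Gorenstein, then it is in particular Cohen-Macaulay; by Proposition~\ref{lem:gr} (equivalently Theorem~\ref{thm:gr}), $(\x^{\a_1})^*,\dots,(\x^{\a_d})^*$ is a regular sequence in $\gr_\fm(R)$, hence $\gr_\fm(R)/J = \gr_{\bar\fm}(\bar R)$ is an Artinian Gorenstein ring. Since a quotient of a Cohen-Macaulay (here Gorenstein) ring by a regular sequence of parameters preserves the Cohen-Macaulay property of the base, $R$ itself is Cohen-Macaulay. Then Lemma~\ref{MG}, whose hypothesis ``$R$ is Gorenstein'' I must first establish — note $R$ Cohen-Macaulay plus $\gr_{\bar\fm}(\bar R)$ Gorenstein forces $\bar R$ Gorenstein, hence $R$ Gorenstein since $J$ is generated by a regular sequence — gives that $\Max_{\preceq_M}\AP(S,E)$ is a singleton. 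Thus (b) holds.

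For (b)$\Rightarrow$(a): assume $\gr_\fm(R)$ is Cohen-Macaulay and $\Max_{\preceq_M}\AP(S,E)$ has a single element. By Theorem~\ref{thm:gr} and Proposition~\ref{lem:gr}, $R$ is Cohen-Macaulay and $(\x^{\a_i})^*$ form a regular sequence, so $\gr_{\bar\fm}(\bar R) = \gr_\fm(R)/J$ is Artinian. To apply Lemma~\ref{MG} I need $R$ Gorenstein; the single-maximal-element condition with respect to $\preceq_M$ implies the same for $\preceq$ (since $\preceq_M$ refines $\preceq$ on comparable pairs, one checks the unique $\preceq_M$-maximal element dominates all elements of $\AP(S,E)$, hence is the unique $\preceq$-maximal one), and $R$ Cohen-Macaulay with a single $\preceq$-maximal Ap\'ery element is Gorenstein by \cite[Theorems~4.6 and 4.8]{RG-1998}. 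Now Lemma~\ref{MG}(c)$\Rightarrow$(a) yields that $\gr_{\bar\fm}(\bar R)$ is Gorenstein, and since it is the Artinian reduction of $\gr_\fm(R)$ by the regular sequence of $(\x^{\a_i})^*$, the ring $\gr_\fm(R)$ is Gorenstein as well.

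The main obstacle I anticipate is the careful bookkeeping around the two partial orders $\preceq$ and $\preceq_M$: Lemma~\ref{MG} is phrased in terms of $\preceq_M$ (which records additivity of orders), whereas the Gorenstein criterion for $R$ from \cite{RG-1998} is in terms of $\preceq$. Under the hypothesis that $\gr_\fm(R)$ is Cohen-Macaulay, Theorem~\ref{thm:gr}(c) tells us $\ord_S$ is additive along sums with the $\a_i$, which is exactly what is needed to reconcile the two orders on $\AP(S,E)$; verifying that ``single maximal element'' transfers correctly between them — in both directions — is the delicate point, but it is a finite combinatorial check using the additivity of $\ord_S$ guaranteed by Cohen-Macaulayness of $\gr_\fm(R)$. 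The rest is a formal consequence of the fact that $J$ is generated by a regular sequence of $1$-forms, so that Gorenstein/Cohen-Macaulay properties ascend and descend between $\gr_\fm(R)$ and its Artinian reduction $\gr_{\bar\fm}(\bar R)$.
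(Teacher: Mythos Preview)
Your approach is the same as the paper's: since $\gr_\fm(R)$ is Cohen-Macaulay under either hypothesis, $(\x^{\a_1})^*,\dots,(\x^{\a_d})^*$ is a regular sequence by Proposition~\ref{lem:gr}, so $\gr_\fm(R)$ is Gorenstein if and only if $\gr_{\bar\fm}(\bar R)$ is, and Lemma~\ref{MG} then gives the equivalence with the singleton condition. You are in fact more explicit than the paper about verifying the hypothesis ``$R$ Gorenstein'' needed to invoke Lemma~\ref{MG}, which the paper leaves implicit.

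One small correction in your (b)$\Rightarrow$(a): the claim that the unique $\preceq_M$-maximal element ``dominates all elements of $\AP(S,E)$'' does not follow from uniqueness alone (a unique maximal element of a poset need not be a maximum). The clean argument is that $\Max_{\preceq}\AP(S,E)\subseteq\Max_{\preceq_M}\AP(S,E)$, since $\a\preceq_M\b$ implies $\a\preceq\b$; hence if the right-hand set is a singleton, so is the nonempty left-hand one, and \cite[Theorems~4.6 and 4.8]{RG-1998} yields that $R$ is Gorenstein as you need.
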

\begin{proof}
	In both statements $\gr_{\fm}(R)$ is Cohen-Macaulay. Thus, $(\x^{\a_1})^*,\dots,(\x^{\a_d})^*$ is a regular sequence in $\gr_\fm(R)$,  by Proposition~\ref{lem:gr}.
	Therefore,   $\gr_{\fm}(R)$ is Gorenstein if and only if   $\gr_{\bar{\fm}}(\bar{R})$ is Gorenstein and, hence, the result follows by Lemma~\ref{MG}.
\end{proof}

\begin{exmp}
{\bf 1)} Assume that $S$ is generated by $\a_1=(0,2)$, $\a_2=(2,1)$, $\a_3=(0,3)$, and $\a_4=(1,2)$. Then, $(\x^{\a_1},\x^{\a_2})$ is a minimal reduction of $\fm$ and one can see that $\gr_{\fm}(R)$ is Cohen-Macaulay by using Theorem \ref{thm:gr}. Moreover, $\AP(S,E)=\{(0,0),(0,3),(1,2),(1,5)\}$ and, therefore, $(1,5)$ is the unique maximal element of $\AP(S,E)$ with respect to $\preceq_M$. Hence, $\gr_{\fm}(R)$ is Gorenstein. \\
{\bf 2)} Let $S$ be the semigroup generated by $(3,1)$, $(0,4)$, $(1,3)$, and $(2,2)$, where $(3,1)$ and $(0,4)$ are the extremal rays of $S$. We have $\AP(S,E)=\{(0,0),(1,3),(2,2)\}$ and $|\Max_{\preceq_M} \AP(S,E)|=2$. Therefore, $\gr_{\fm}(R)$ is not Gorenstein, even if it is Cohen-Macaulay.
\end{exmp}

  We close this section by a formula for the Castelnouvo-Mumford regularity of $\gr_\fm(R)$, when $\fm$ has monomial minimal reduction and $\gr_\fm(R)$ is Cohen-Macaulay. 
\begin{prop}\label{p}
	If $\fm$ has a monomial minimal reduction ideal and $\gr_\fm(R)$ is Cohen-Macaulay,  then the Castelnuovo-Mumford regularity of $\gr_\fm(R)$ is 
	$$\reg(\gr_\fm(R))=\max\{\ord_S(\w) \, ; \, \w\in\AP(S,E)\}.$$
\end{prop}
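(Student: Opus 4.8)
The plan is to reduce the regularity computation to the associated graded ring of a zero-dimensional quotient, using the hypothesis that $\gr_\fm(R)$ is Cohen-Macaulay together with Proposition~\ref{lem:gr}. Set $I=(\x^{\a_1},\dots,\x^{\a_d})$, which by Lemma~\ref{mm} is a minimal reduction ideal of $\fm$. Since $\gr_\fm(R)$ is Cohen-Macaulay, Proposition~\ref{lem:gr} tells us that $(\x^{\a_1})^*,\dots,(\x^{\a_d})^*$ form a homogeneous regular sequence of linear forms in $\gr_\fm(R)$, and moreover $\ini(I)=I$ (this last point is observed in Remark~\ref{beta}). Hence $\gr_\fm(R)/\big((\x^{\a_1})^*,\dots,(\x^{\a_d})^*\big)\cong\gr_{\bar\fm}(\bar R)$, where $\bar R=R/I$ and $\bar\fm=\fm/I$, as recorded in Remark~\ref{beta}.

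The first key step is the standard fact that quotienting a (standard graded) Cohen-Macaulay algebra by a regular sequence of linear forms does not change the Castelnuovo-Mumford regularity: $\reg(\gr_\fm(R))=\reg(\gr_{\bar\fm}(\bar R))$. I would cite this from a standard reference (e.g.\ the behaviour of regularity under hyperplane sections for Cohen-Macaulay modules). The second key step is that $\gr_{\bar\fm}(\bar R)$ is a standard graded \emph{Artinian} algebra — its Hilbert function is $i\mapsto\beta_i(S)$ in the notation of Remark~\ref{beta}, and it vanishes for $i>d(S):=\max\{\ord_S(\w)\,;\,\w\in\AP(S,E)\}$ while $\beta_{d(S)}(S)\neq 0$. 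For an Artinian standard graded algebra the Castelnuovo-Mumford regularity equals the largest degree in which the algebra is non-zero (the top of its socle, equivalently the last nonvanishing graded piece), so $\reg(\gr_{\bar\fm}(\bar R))=d(S)$. Combining the two steps gives $\reg(\gr_\fm(R))=d(S)=\max\{\ord_S(\w)\,;\,\w\in\AP(S,E)\}$, which is the claimed formula.

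I expect the main obstacle to be making the reduction to $\bar R$ fully rigorous — specifically, verifying that the linear forms $(\x^{\a_i})^*$ really are a regular sequence \emph{and} generate an ideal equal to $\ini(I)$, so that the quotient is genuinely $\gr_{\bar\fm}(\bar R)$ and not merely a quotient mapping onto it. Both facts are available: regularity of the sequence is exactly condition (b) of Proposition~\ref{lem:gr}, and $\ini(I)=I$ is noted in Remark~\ref{beta} (it holds because $I$ is generated by a subset of the variables of the polynomial presentation, so the initial forms are the variables themselves). After that, the only remaining care is to invoke the correct behaviour of $\reg$ under hypersurface/linear-section by a regular element on a Cohen-Macaulay module, and to identify $\reg$ of an Artinian graded algebra with its top non-vanishing degree; both are textbook. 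No delicate estimate or new combinatorial input is needed beyond what Remark~\ref{beta} already supplies, so the argument is short.
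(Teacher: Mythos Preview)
Your argument is correct, but it takes a different route from the paper's. The paper's proof is a one-liner: it cites \cite[Corollary~3.5]{Trung-1987} to get $\reg(\gr_\fm(R))=r_I(\fm)$, and then invokes the last statement of Theorem~\ref{thm:gr} (proved via Lemma~\ref{lem:r}) which already identifies the reduction number with $\max\{\ord_S(\w)\,;\,\w\in\AP(S,E)\}$. So the paper passes through the reduction number and an external reference, whereas you bypass both by reducing modulo the linear regular sequence $(\x^{\a_1})^*,\dots,(\x^{\a_d})^*$ and reading off the regularity of the resulting Artinian ring $\gr_{\bar\fm}(\bar R)$ as its top nonvanishing degree $d(S)$, exactly as set up in Remark~\ref{beta}. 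Your route is more self-contained (no appeal to Trung's result or to the reduction-number computation), while the paper's is shorter given what has already been established in the preceding sections. The two are of course closely related under the hood: Trung's identity $\reg=r_I$ in the Cohen--Macaulay case is typically proved by the same Artinian-reduction idea you use.
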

\begin{proof}
By \cite[Corollary~3.5]{Trung-1987}, we have  $\reg(\gr_\fm(R))=r_I(\fm)$, and the result follows by Lemma~\ref{lem:r}  and Theorem~\ref{thm:gr}. 
\end{proof}

 As a consequence, we get another proof of \cite[Corollary~4.9]{Ojeda-Tenorio-2017}. Note that in the statement of \cite[Corollary~4.9]{Ojeda-Tenorio-2017} there is a typo, indeed $\reg(I_S)$ should be $\reg(I_S)-1=\reg(R)$.

\begin{cor}
If $\kk[S]$ is Cohen-Macaulay and the defining ideal $I_S$ of $S$ is homogeneous for the standard grading, the Castelnuovo-Mumford regularity of $R$ is 
	\[
	\reg(R)=\max\{\ord_S(\w) \, ; \, \w\in\AP(S,E)\}.
	\] 
\end{cor}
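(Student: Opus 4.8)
The plan is to deduce this corollary directly from Proposition~\ref{p} by showing that its two hypotheses are automatically satisfied in the present setting. First I would invoke Remark~\ref{rem:deg1}: since the defining ideal $I_S$ is homogeneous for the standard grading, all expressions of every element of $S$ are maximal, which in particular forces $\deg(\a_{d+j})=1$ for $j=1,\dots,s$, and moreover $R\cong\gr_\fm(R)$ as graded rings. The equality $\deg(\a_{d+j})=1\geq 1$ for all $j$ is exactly condition (a) of Theorem~\ref{monomial red}, so $I=(\x^{\a_1},\dots,\x^{\a_d})$ is a reduction ideal of $\fm$; by Lemma~\ref{mm}(1) it is then a monomial minimal reduction. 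Thus the first hypothesis of Proposition~\ref{p} holds.

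Next I would check the second hypothesis, namely that $\gr_\fm(R)$ is Cohen-Macaulay. Since $R\cong\gr_\fm(R)$ and we are assuming $\kk[S]=R$ is Cohen-Macaulay, this is immediate. Alternatively one could note that when all expressions are maximal, $\ord_S(\b+\a_i)=\ord_S(\b)+1$ holds trivially for all $\b\in S$ and all $i$, so condition (b) of Theorem~\ref{thm:gr} is satisfied, giving the Cohen-Macaulayness of $\gr_\fm(R)$ that way.

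With both hypotheses verified, Proposition~\ref{p} yields
\[
\reg(\gr_\fm(R))=\max\{\ord_S(\w) \, ; \, \w\in\AP(S,E)\},
\]
and since $R\cong\gr_\fm(R)$ we conclude $\reg(R)=\max\{\ord_S(\w) \, ; \, \w\in\AP(S,E)\}$, as claimed. I do not anticipate a genuine obstacle here: the corollary is essentially a specialization of Proposition~\ref{p}, and the only thing requiring a moment's care is spelling out, via Remark~\ref{rem:deg1}, why homogeneity of $I_S$ simultaneously gives the monomial minimal reduction and the isomorphism $R\cong\gr_\fm(R)$ that lets one transport the regularity formula from $\gr_\fm(R)$ back to $R$.
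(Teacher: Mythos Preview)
Your proposal is correct and follows the same route as the paper's proof, which is simply the one-liner ``Since $I_S$ is homogeneous, $R\cong\gr_\fm(R)$ and the result follows by Proposition~\ref{p}.'' You have spelled out in more detail why the hypotheses of Proposition~\ref{p} are satisfied; the only minor quibble is that Remark~\ref{rem:deg1} literally states the implication from $\deg(\a_{d+j})=1$ to homogeneity of $I_S$, not the converse you invoke, but that converse is immediate (an inhomogeneous binomial in $I_S$ would give two expressions of the same element with different lengths).
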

\begin{proof}
Since $I_S$ is homogeneous, $R\cong\gr_\fm(R)$ and the result follows by Proposition~\ref{p}. 
\end{proof}

\begin{exmp}
Let $S$ be the affine semigroup generated by $\a_1=(5,0,0), \a_2=(0,5,0), \a_3=(0,0,5), \a_4=(4,1,0), \a_5=(2,0,3), \a_6=(1,0,4)$ and $\a_7=(1,3,1)$. As it is shown in \cite[Section~6]{Briales-}, $I_S$ is homogeneous, $\reg(R)=\reg(I_S)-1=6$, and $(18,10,2)=2\a_7+4\a_4$ has the maximum order, $6$, among the elements of $\Ap(S,E)$. 
\end{exmp}

\subsection*{Acknowledgments}
The first author was partially funded by
the project ``Propriet\`{a} locali e globali di anelli e di variet\`{a} algebriche"-PIACERI 2020-2022 Universit\`{a} degli Studi di Catania.
The second author acknowledges the receipt of a grant from the ICTP-INdAM Research in Pairs Programme, Trieste, Italy. She was in part supported by a grant from IPM (No. 1400130112).
The third author was partially supported by INdAM, more precisely he was ``titolare di un Assegno di Ricerca dell'Istituto Nazionale di Alta Matematica''. 
Part of this work has been developed during the visit of the second author to the Department of Mathematics and Computer Science, University of Catania in 2019. The authors would like to thank the Department for its hospitality and support.\\
The authors also thank the anonymous referee for her/his careful reading of this paper and for many remarks that helped them to improve its quality.


\end{document}